\definecolor{darkblue}{RGB}{0,0,160}
\crefname{thm}{theorem}{theorems}
\Crefname{thm}{Theorem}{Theorems}
\tikzstyle{invisivertex} = [black, shape=rectangle, minimum size=0pt, inner sep=3pt]
\tikzstyle{moebius} = [draw, violet!80!white, shape=circle, minimum size=0pt, inner sep=2pt]
\newcommand{\A}{\mathcal{A}}
\newcommand{\LL}{\mathcal{L}}
\newcommand{\VG}{\operatorname{V\ \hspace*{-6pt} G}}
\newcommand{\FP}{\operatorname{O\ \hspace*{-5pt} R}}
\newcommand{\C}{\mathbb{C}}
\newcommand{\R}{\mathbb{R}}
\newcommand{\RR}{\mathbf{R}}
\newcommand{\Anti}{\mathbf{A}}
\newcommand{\ceil}{\mathrm{ceil}}
\newcommand{\ideal}{{\ideals}}
\newcommand{\ideals}{{\mathcal I}}
\newcommand{\codim}{{\operatorname{codim}}}
\newcommand{\Hilb}{\mathrm{Hilb}}
\newcommand{\Poin}{\mathrm{Poin}}
\newcommand{\whit}[2]{\thinspace c_{#1}(#2)\thinspace} 
\newcommand{\GGG}{{\mathcal G}}
\newcommand{\grr}{{\mathfrak{gr}}}
\newcommand{\Shi}{{\sf Shi}}
\newcommand{\Nar}{{\sf Nar}}
\newcommand{\one}{{e}}
\newcommand{\Inv}{\operatorname{Inv}}
\newcommand{\Dfn}[1]{\emph{\bfseries #1}}
\renewcommand{\th}{${}^{\operatorname{th}}$}
\newcommand{\changes}[1]{#1}
\newcommand{\init}{\mathrm{in}}
\newtheorem{thm}{Theorem}[section]
\newtheorem{clm}[thm]{Claim}
\newtheorem{coro}[thm]{Corollary}
\newtheorem{prop}[thm]{Proposition}
\newtheorem{lem}[thm]{Lemma}
\theoremstyle{definition}
\newtheorem{ex}[thm]{Example}
\newtheorem{rem}[thm]{Remark}
\newtheorem{note}[thm]{Note}
\title{Shi arrangements restricted to Weyl cones}
\author[G.~Dorpalen-Barry]{Galen Dorpalen-Barry}
\address[G.~Dorpalen-Barry]{Department of Mathematics, Texas A\&M University, USA}
\email{dorpalen-barry@tamu.edu}
\author[C.~Stump]{Christian Stump}
\address[C.~Stump]{Fakultät für Mathematik, Ruhr-Universit\"at Bochum, Germany}
\email{christian.stump@rub.de}
\subjclass[2010]{Primary 20F55; Secondary 52C35}
\thanks{
  We would like to thank Frédéric Chapoton and Vic Reiner for useful references, interesting discussions, and feedback on a draft of this manuscript.
  We also thank Raman Sanyal for helpful comments.
  \changes{We are grateful to the two anonymous referees for their detailed feedback; their comments significantly improved the readability of the manuscript.}
  Both authors are supported by the DFG Heisenberg grant STU 563/\{4--6\} ``Noncrossing phenomena in Algebra and Geometry''.
}
\begin{document}

\begin{abstract}
  We consider the restrictions of Shi arrangements to Weyl cones, their relations to antichains in the root poset, and their intersection posets.
  For any Weyl cone, we provide bijections between regions, flats intersecting the cone, and antichains of a naturally-defined subposet of the root poset.
  This gives a refinement of the parking function numbers via the Poincar\'e polynomials of the intersection posets of all Weyl cones.
  Finally, we interpret these Poincar\'e polynomials as the Hilbert series of three isomorphic graded rings.
  One of these rings arises from the Varchenko-Gel'fand ring, another is the coordinate ring of the vertices of the order polytope of a subposet of the root poset, and the third is purely combinatorial.
\end{abstract}

\maketitle


\section{Introduction and main results}

Shi arrangements are well-studied hyperplane arrangements associated to an irreducible finite Weyl group.
These were initially defined by Shi to study \emph{Kazhdan-Lusztig cells} of affine Weyl groups~\cite{shi-sign-types} and since then have appeared in many algebraic, geometric, and combinatorial contexts.
We refer to~\cite[\S 5.1.4]{armstrong} and~\cite{fishel} for their history in the context of finite Weyl groups.
Among many, two classical results state that
\begin{itemize}
  \item the number of regions is the \Dfn{$W$-parking function number} $(h + 1)^{\ell}$ where $h$ is the Coxeter number and $\ell$ is the rank~\cite[Theorem~8.1]{shi-sign-types}, and
  \item the number of dominant regions is the \Dfn{$W$-Catalan number} $\prod_{i=1}^\ell (d_i + h)/{d_i}$, where $d_1,\dots,d_\ell$ are the invariant degrees~\cite[Theorem 1.1]{yoshinaga}.
\end{itemize}
In this paper, we study interpretations and refinements of these two (and of related) formulas in terms of the intersection poset of the Shi arrangement.
In the final section, we construct three rings and show that they are isomorphic for Weyl cones of Shi arrangements: the Varchenko--Gel'fand ring of a cone (in the sense of \cite{dorpalenbarry}), the coordinate ring of the vertices of the order polytope of the root poset, and the order ring of a poset (in the sense of \cite{chapoton}). 

\subsection{Background}

We start by briefly fixing the necessary notation.
We refer to~\cite{humphreys} for further background on root systems and to~\cite{fishel} for a more detailed introduction to Shi arrangements.
In \Cref{sec:longexample}, we illustrate the main results with a detailed example in type~$B_2$.

\medskip

Let $\Delta \subseteq \Phi^+ \subseteq \Phi \subset V$ be an irreducible crystallographic root system with a given choice of \Dfn{simple roots}~$\Delta$, \Dfn{positive roots}~$\Phi^+$ and \Dfn{negative roots} $\Phi^- = -\Phi^+$ inside a Euclidean space~$V$ of dimension~$\ell$ with inner product $\langle\cdot,\cdot\rangle$.
The corresponding Weyl group $W = W(\Phi)$ is the subgroup of the orthogonal group~$O(V)$ generated by the reflections along the hyperplanes orthogonal to the roots in~$\Phi$.
We assume the action of~$W$ on~$V$ to be irreducible, meaning that~$W$ does not fix any non-zero proper subspace of~$V$.
Then the \Dfn{Shi arrangement} is
\[
  \Shi(\Phi^+) = \Shi_0(\Phi^+) \cup \Shi_1(\Phi^+)\,,
\]
where
\[
  \Shi_i(\Phi^+) = \big\{ H_{\beta,i} = \{ v \in V \mid \langle v, \beta\rangle = i\} \mid \beta \in \Phi^+ \big\} \,.
\]
We observe that $\Shi_0(\Phi^+) = \A(W)$ is the \Dfn{reflection arrangement} of the Weyl group~$W$.
In particular, it does not depend on the choice of simple and positive roots.
The \Dfn{dominant cone} in~$\A(W)$ (also known as \emph{fundamental chamber}) is the (open) cone of~$V$ defined by
\[
  C = \{ v \in V \mid \langle v,\beta\rangle > 0 \text{ for all } \beta \in \Phi^+ \}
\]
and for any $w\in W$, the map $w \mapsto wC$ is a bijection between elements in~$W$ and cones of $\A(W)$, that is, the (open) connected components of the complement $V \setminus \bigcup_{H\in \A(W)} H$.
These cones have the explicit description
\[
  wC = \bigcap_{\beta\,\in\, \Inv(w)} H_{\beta,0}^- \cap \bigcap_{\beta\,\in\, \Phi^+\setminus \Inv(w)} H_{\beta,0}^+\,,
\]
where $\Inv(w) = \Phi^+ \cap w\Phi^-$ is the (left-)\Dfn{inversion set} of $w \in W$ and
\[
  H_{\beta,i}^+ = \{ v \in V \mid \langle v, \beta\rangle > i\} \text{ and } H_{\beta,i}^- = \{ v \in V \mid \langle v, \beta\rangle < i\}
\]
are the open halfspaces defined by the hyperplane $H_{\beta,i}$.
Each cone~$wC$ contains (possibly multiple) Shi regions.
This suggests the following refinement
\[
  (h+1)^\ell = \# \RR\big(\Shi(\Phi^+)\big) = \sum_{w \in W} \# \RR_w\,,
\]
where $\RR\big(\Shi(\Phi^+)\big)$ is the set of Shi regions and
\[
  \RR_w = \big\{ R \in \RR(\Shi(\Phi^+)) \mid R \subset wC\big\}
\]
is the set of Shi regions inside the cone~$wC$.
The Shi regions $\RR_\one$ for the identity element $\one \in W$ inside the dominant cone~$C$ are the \Dfn{dominant regions} and are counted by the Catalan numbers.
These are in bijection with certain subsets of positive roots, namely antichains in the \Dfn{root poset} which is the poset on positive roots~$\Phi^+$ given by the cover relations $\beta \prec \gamma$ for $\gamma - \beta \in \Delta$~\cite{athanasiadis-catalan-nums}.

\subsection{Main results}

Our first result generalizes this bijective correspondence to Shi regions inside any cone~$wC$.
The \Dfn{ceiling set} $\ceil(R) \subseteq\Phi^+$ of a Shi region~$R$ is the set of positive roots~$\beta$ such that $H_{\beta,1}$ is a facet-defining hyperplane of~$R$ with $R \subset H_{\beta,1}^-$.
This is,~$\beta$ is a ceiling of~$R$ if $R \subseteq H_{\beta,1}^-$ and $\overline R \cap H_{\beta,1}$ is a face of dimension $\ell-1$ of the closure~$\overline R$ of the region~$R$.
The following gives a bijection between $\RR_w$ and the set of antichains in the subposet $\Phi^+ \setminus \Inv(w^{-1})$ of the root poset,
\[
  \Anti_w = \big\{ A \subset \Inv(w^{-1}) \mid \beta,\gamma \text{ incomparable for any } \beta,\gamma \in A \text{ with } \beta \neq \gamma \big\}\,.
\]
An \Dfn{order ideal} in~$\Phi^+$ is a subset $I \subseteq \Phi^+$ such that $\beta \prec \gamma \in I$ in root poset order implies $\beta \in I$.
To an antichain $A \in \Anti_w$, we associate the corresponding order ideal in $\Phi^+\setminus \Inv(w^{-1})$ generated by~$A$,
\[
  \ideal_w(A) = \big\{ \gamma \in \Phi^+\setminus\Inv(w^{-1}) \mid \gamma \prec \beta \text{ for some } \beta \in A \big\}\,.
\]

\begin{thm}
\label{main-thm:chamber-bijection}
  The map
  \begin{align*}
    \varphi_w : \RR_w &\longrightarrow  \Anti_w \\
    R &\longmapsto w^{-1}\big(\ceil(R)\big)
  \intertext{
  is a bijection with inverse
  }
    \varphi_w^{-1} : \Anti_w &\longrightarrow \RR_w \\
    A &\longmapsto \left\{
      v \in V ~\middle\vert~
      \begin{matrix}
      0 < \langle v, \gamma \rangle < 1 \text{ for } \gamma\in w\big(\ideal_w(A)\big)\\[3pt]
      1 < \langle v, \gamma \rangle\hspace*{22pt} \text{ for } \gamma\notin w\big(\ideal_w(A)\big)
      \end{matrix}
    \right\}\,.
  \end{align*}
\end{thm}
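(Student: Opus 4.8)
The plan is to first establish that $\varphi_w^{-1}$ as written is well-defined, i.e.\ that for each antichain $A \in \Anti_w$ the open polyhedron
\[
  R_A = \left\{ v \in V ~\middle\vert~ \begin{matrix} 0 < \langle v, \gamma\rangle < 1 \text{ for } \gamma \in w(\ideal A)\\[3pt] 1 < \langle v, \gamma\rangle \hspace*{22pt}\text{ for } \gamma \notin w(\ideal A)\end{matrix}\right\}
\]
is nonempty, lies inside the cone $wC$, and is actually a region of $\Shi(\Phi^+)$ (not cut by any further hyperplane). Nonemptiness is the crux and I expect it to be the main obstacle: one must exhibit a point that is small (value in $(0,1)$) on every root of $w(\ideal A)$ and large (value $>1$) on the remaining positive roots, and the constraints are only consistent because $\ideal A$ is an \emph{order ideal} of the subposet $\Phi^+ \setminus \Inv(w^{-1})$. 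Pulling everything back by $w^{-1}$, this reduces to the dominant-cone statement: I would show that for an order ideal $I$ of the full root poset $\Phi^+$, the set $\{v \in C \mid 0 < \langle v,\beta\rangle < 1 \text{ for }\beta\in I,\ \langle v,\beta\rangle > 1 \text{ for }\beta \in \Phi^+\setminus I\}$ is nonempty precisely when $I$ is an order ideal whose set of maximal elements is an antichain — this is essentially Shi's / Athanasiadis's characterization of dominant Shi regions, and I would cite \cite{athanasiadis-catalan-nums} or \cite{shi-sign-types} for it. The subtlety here is that $\Inv(w^{-1}) = \Phi^+ \cap w^{-1}\Phi^-$ consists exactly of the positive roots $\beta$ with $\langle v,\beta\rangle < 0$ for $v \in wC$ after applying $w^{-1}$; so on the cone $wC$ the roots split as those forced negative (in $w(\Inv(w^{-1})) = \Inv(w^{-1})$-image... more precisely $w\Phi^- \cap \Phi^+$) versus the rest, and the Shi hyperplanes $H_{\beta,1}$ with $\beta$ forced negative never meet $wC$, which is why only the subposet $\Phi^+\setminus\Inv(w^{-1})$ is relevant.

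Granting the well-definedness of $\varphi_w^{-1}$, I would next show the two maps are mutually inverse. For $\varphi_w^{-1}\circ\varphi_w = \mathrm{id}$: given a region $R \in \RR_w$, one checks that $R$ equals the polyhedron $R_{\ceil(R)}$ determined by its ceiling set, i.e.\ the ceiling set together with the cone $wC$ already determines on which side of each Shi hyperplane $R$ lies. The key point is that if $\beta \in \Phi^+\setminus\Inv(w^{-1})$ and $R \subset H_{\beta,1}^-$, then for every root $\gamma$ below $\beta$ in the subposet we also have $R \subset H_{\gamma,1}^-$ (since $\langle v,\gamma\rangle \le \langle v,\beta\rangle$ when $\beta - \gamma$ is a nonnegative combination of simple roots and $v \in wC$ — wait, this inequality needs $v$ dominant, so again I pull back by $w^{-1}$), and conversely the roots on which $R$ takes values $> 1$ are exactly the complement of the order ideal generated by $\ceil(R)$. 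Thus $w^{-1}(\ceil(R))$ is an antichain and its generated order ideal recovers $R$. For $\varphi_w \circ \varphi_w^{-1} = \mathrm{id}$: starting from an antichain $A$, I must verify that the ceiling set of the region $R_A$ is exactly $w(A)$ — that the facet-defining hyperplanes $H_{\gamma,1}$ of $R_A$ with $R_A$ on the negative side are precisely those $\gamma = w(\beta)$ with $\beta \in A$. One inclusion is immediate since the inequality $\langle v,\gamma\rangle < 1$ appears in the definition of $R_A$ for $\gamma = w(\beta)$, $\beta\in A$; for the reverse (that these inequalities are facet-defining and no other upper-bound constraint is), I would argue that removing the constraint for a maximal $\beta \in A$ enlarges the region — using that $\beta$ being maximal in $\ideal A$ means one can increase $\langle v,\beta\rangle$ past $1$ while keeping all other defining inequalities and the cone membership.

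A cleaner route for the bijectivity, which I would pursue in parallel, is to avoid checking $\varphi_w\circ\varphi_w^{-1} = \mathrm{id}$ directly by a counting argument: \Cref{main-thm:chamber-bijection} would follow once I show (i) $\varphi_w^{-1}$ is well-defined and injective, and (ii) $\varphi_w \circ \varphi_w^{-1} = \mathrm{id}$ \emph{or} alternatively that $\#\RR_w = \#\Anti_w$ via an independent count. But since the equality $\#\RR_w = \#\Anti_w$ is presumably itself a consequence we want (it is the refinement $(h+1)^\ell = \sum_w \#\RR_w$ mentioned in the text), I will instead carry out the direct two-sided verification above. In summary: the real work is the nonemptiness/region claim for $\varphi_w^{-1}$, which I reduce to the classical dominant case by the substitution $v \mapsto w^{-1}v$ together with the identification of $wC$ via $\Inv(w^{-1})$; everything else is bookkeeping with order ideals and facet-defining inequalities. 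The hardest single step is confirming that the pulled-back polyhedron is genuinely a full-dimensional Shi region cut out by exactly the listed inequalities, which I expect to handle by an explicit perturbation/interior-point construction inside the dominant cone.
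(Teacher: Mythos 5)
Your overall route---transporting everything to the dominant cone by $w^{-1}$ and using that $H_{\beta,1}$ misses $wC$ exactly when $\beta\in\Inv(w)$ (the paper's \Cref{lem:inversions})---is the same as the paper's, but there is a genuine gap at precisely the step you yourself flag as the crux: nonemptiness. After applying $w^{-1}$, a Shi region inside $wC$ becomes a region of the \emph{deleted} arrangement $\Shi(E)=\Shi_0(\Phi^+)\cup\{H_{\gamma,1}\mid\gamma\in E\}$ inside $C$, where $E=\Phi^+\setminus\Inv(w^{-1})$, and $\ideal{A}$ is an order ideal of the \emph{subposet} $E$, not of $\Phi^+$. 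The statement you propose to cite from Shi/Athanasiadis concerns ideals of the full root poset and the full Shi arrangement, i.e.\ only the case $w=e$, and it does not apply to the pulled-back data. Concretely, in type $B_2$ with $w=st$, so $E=\{\alpha,2\alpha+\beta\}$, take $A=\{2\alpha+\beta\}\in\Anti_{st}$: the pulled-back ideal is $I=\{\alpha,2\alpha+\beta\}$, which is not an ideal of $\Phi^+$ (it omits $\alpha+\beta\prec 2\alpha+\beta$), and your constraint set ``values in $(0,1)$ on $I$, values $>1$ on the remaining positive roots'' is empty, since $\langle v,\alpha\rangle>0$ and $\langle v,\alpha+\beta\rangle>1$ force $\langle v,2\alpha+\beta\rangle=\langle v,\alpha\rangle+\langle v,\alpha+\beta\rangle>1$. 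The correct target region imposes no condition at all relative to $H_{\gamma,1}$ for $\gamma\in\Inv(w^{-1})$, so its nonemptiness is genuinely a statement about Shi deletions, which the classical result you cite does not cover; your sentence acknowledging that ``only the subposet is relevant'' is not reconciled with the reduction you actually propose.

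The missing bridge, which is how the paper closes this (\Cref{thm:1} via \Cref{prop:Shi}), is short: an antichain of the subposet $E$ is still an antichain of $\Phi^+$, so by Shi/Athanasiadis the dominant region of the \emph{full} Shi arrangement whose ceiling set is $A$ (cut out using the ideal generated by $A$ in all of $\Phi^+$) is nonempty, and it is contained in the prescribed region of $\Shi(E)$; since the prescription fixes a side of every hyperplane of $\Shi(E)$ that cuts $C$, nonemptiness forces the prescribed set to be a region. With this insertion, the rest of your sketch is sound and parallels the paper: ceilings are antichains via the inequality $\langle v,\gamma\rangle\le\langle v,\beta\rangle$ valid in the dominant cone, and the identification of the relevant subposet uses $\Phi^+\setminus\Inv(w)=w\big(\Phi^+\setminus\Inv(w^{-1})\big)$ (\Cref{lem:inversionset}). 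Note also that the paper sidesteps your facet-by-facet verification that $\ceil\big(\varphi_w^{-1}(A)\big)=w(A)$: it proves instead that both $\varphi_E$ and $\varphi_E^{-1}$ are well-defined and injective between finite sets, which yields bijectivity and mutual inverseness with less geometric bookkeeping; your direct verification is feasible but is the more delicate of the two options.
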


This theorem solves a problem originally proposed by Athanasiadis-Linusson in~\cite[Section 4]{athanasiadis-linusson}, but we will see that it follows directly from previous work of Athanasiadis~\cite{athanasiadis-catalan-nums} and Armstrong-Reiner-Rhoades~\cite{Armstrong-Reiner-Rhoades}.
Indeed, without making it explicit, this correspondence was already implicitly considered in~\cite{Armstrong-Reiner-Rhoades}, see~\Cref{lem:inversions} below.

\medskip

The main result of this paper is the following, which gives a bijection between antichains in $\Anti_w$ (and thus also of the Shi regions in $\RR_w$) and elements in the \Dfn{intersection poset}
\[
  \LL_w = \big\{ X \in \LL\big(\Shi(\Phi^+)\big) \mid X \cap wC \neq \emptyset \big\},
\]
where $\LL\big(\Shi(\Phi^+)\big)$ is the \Dfn{intersection poset} of the Shi arrangement given by the set of (nonempty) intersections of hyperplanes of $\Shi(\Phi^+)$.
Observe that $\LL_w$ is an intersection poset of a cone as considered in~\cite{dorpalenbarry}.

\medskip

While having a similar flavor as \Cref{main-thm:chamber-bijection}, the following result is---to the best of our knowledge---an interpretation of antichains of the root poset that has not appeared in the literature.

\begin{thm}
\label{main-thm:intersection-bijection}
  The map
  \begin{align*}
    \psi_w : \LL_w &\longrightarrow  \Anti_w \\
    X &\longmapsto \big\{w^{-1}(\beta) \mid \beta\in E,\ X \subseteq H_{\beta,1}\big\}
  \intertext{
    is a bijection with inverse
  }
    \psi_w^{-1} : \Anti_w &\longrightarrow \LL_w \\
    A &\longmapsto \bigcap_{\beta\in wA}H_{\beta,1}\,.
  \end{align*}
\end{thm}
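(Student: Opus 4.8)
The plan is to show that the two maps $\psi_w$ and $\psi_w^{-1}$ are well-defined and mutually inverse, reducing everything to the dominant case $w = \one$ by the $W$-equivariance built into the definitions, and then invoking the region bijection $\varphi_\one$ of \Cref{main-thm:chamber-bijection} together with Athanasiadis' description of dominant regions. First I would record the reduction: for any flat $X \in \LL(\Shi(\Phi^+))$, the set of positive roots $\beta$ with $X \subseteq H_{\beta,1}$ is, after applying $w^{-1}$, a subset of $w^{-1}(\Phi^+)$; and the condition $X \cap wC \neq \emptyset$ translates, under $v \mapsto w^{-1}v$, into a statement about a flat meeting the dominant cone $C$. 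So it suffices to prove that $X \mapsto \{\beta \mid X \subseteq H_{\beta,1}\}$ is a bijection from $\LL_\one$ to the set of antichains of the root poset $\Phi^+$, with the stated inverse. Here the subtlety is that $\Inv(\one^{-1}) = \emptyset$, so $\Anti_\one$ is the set of \emph{all} antichains of $\Phi^+$, matching the classical Catalan count.

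Next I would establish the two directions. For $\psi_\one$ well-defined: given $X \in \LL_\one$, pick a point $v \in X \cap C$; then $\langle v,\beta\rangle > 0$ for all $\beta \in \Phi^+$, and the set $S = \{\beta \in \Phi^+ \mid \langle v,\beta\rangle = 1\}$ is exactly $\{\beta \mid X \subseteq H_{\beta,1}\}$ because $X$ is the intersection of all Shi hyperplanes containing it and the only hyperplanes through a point of $C$ are the $H_{\beta,1}$. One must check $S$ is an antichain: if $\beta \prec \gamma$ in the root poset with both in $S$, write $\gamma = \beta + \delta_1 + \cdots + \delta_k$ with $\delta_i \in \Delta$; since $v \in C$ we have $\langle v,\delta_i\rangle > 0$, forcing $\langle v,\gamma\rangle > \langle v,\beta\rangle$, a contradiction. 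For the reverse direction: given an antichain $A$, the flat $\bigcap_{\beta \in A} H_{\beta,1}$ meets $C$ — this is precisely the content of the geometry underlying Athanasiadis' bijection (\cite{athanasiadis-catalan-nums}), and I would cite \Cref{main-thm:chamber-bijection} to produce a region $R = \varphi_\one^{-1}(A)$ whose closure meets this flat in its relative interior, so the flat is genuinely in $\LL_\one$ and the set of $H_{\beta,1}$ containing it is exactly $A$ (not larger), again by the antichain/order-ideal dichotomy in the description of $\varphi_\one^{-1}$.

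Finally I would verify $\psi_\one \circ \psi_\one^{-1} = \operatorname{id}$ and $\psi_\one^{-1} \circ \psi_\one = \operatorname{id}$. The first composite: starting from an antichain $A$, the flat $X = \bigcap_{\beta \in A} H_{\beta,1}$ satisfies $\{\beta \mid X \subseteq H_{\beta,1}\} \supseteq A$, and the reverse inclusion is the key point — no root outside $A$ can vanish-to-$1$ on all of $X$. Here I would use that $X$ contains a point $v$ in the open region $\varphi_\one^{-1}(A)$, where $\langle v,\gamma\rangle \neq 1$ for $\gamma \notin \ideal A \cup A$ and $\langle v,\gamma\rangle < 1$ for $\gamma \in \ideal A$, so equality to $1$ on $X$ pins down exactly $A$. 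The second composite is then formal once injectivity of $\psi_\one$ is in hand, which follows because a flat in an arrangement is determined by the set of hyperplanes containing it. The \textbf{main obstacle} I anticipate is the claim that $\bigcap_{\beta \in A} H_{\beta,1}$ actually meets the dominant cone for every antichain $A$ and that no \emph{additional} Shi hyperplane contains this intersection: this is exactly where one needs the fine structure of dominant Shi regions (equivalently, \Cref{lem:inversions} and the explicit inverse in \Cref{main-thm:chamber-bijection}), rather than a soft argument, and getting the "no extra hyperplane" direction cleanly — ruling out both $H_{\gamma,0}$ and $H_{\gamma,1}$ for $\gamma \notin A$ — will require carefully exploiting that the region $\varphi_\one^{-1}(A)$ has the flat on its boundary with the correct codimension.
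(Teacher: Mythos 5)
Your argument hinges on exactly the step you flag as the ``main obstacle,'' and that step is not established.  For an antichain $A$ you must show that $X=\bigcap_{\beta\in A}H_{\beta,1}$ meets the \emph{open} dominant cone and that no further Shi hyperplane contains it.  Citing \Cref{main-thm:chamber-bijection} only gives that the region $R=\varphi_\one^{-1}(A)$ is nonempty with ceiling set $A$; it does not produce a point of $C$ lying on all the hyperplanes $H_{\beta,1}$, $\beta\in A$, simultaneously.  Facet-defining hyperplanes of a polyhedron need not have a common point with its closure, so ``the closure of $R$ meets this flat in its relative interior'' is an unproved claim that is essentially equivalent to what you are trying to prove.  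Worse, in your verification of $\psi_\one\circ\psi_\one^{-1}=\operatorname{id}$ you use ``a point $v$ of $X$ in the open region $\varphi_\one^{-1}(A)$,'' which cannot exist: on that region $\langle v,\beta\rangle<1$ strictly for every $\beta\in\ideal{A}\supseteq A$, so the open region is disjoint from $X$.  Even with a point of $X\cap\overline{R}$ in hand, you would still have to rule out the extra incidences $\langle v,\gamma\rangle=1$ for $\gamma\notin\ideal{A}$ (which are allowed on $\overline{R}$), and the genericity argument needed for that is missing.

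The paper never proves surjectivity directly.  It shows $\psi$ is well-defined and injective, then uses \Cref{prop:sommers} (antichains are linearly independent) to conclude that every lower interval of the intersection poset is Boolean, hence $|\mu(V,X)|=1$, and then Zaslavsky's theorem~\cite{zaslavsky77} for the cone gives $\#\LL_w=\#\RR_w$, which equals $\#\Anti_w$ by the region bijection; bijectivity follows by counting, and only afterwards is the formula for the inverse read off.  If you want to keep a direct construction, you must actually exhibit a point of $C$ on $\bigcap_{\beta\in A}H_{\beta,1}$ with no extra incidences --- that is the genuinely nontrivial content.  A secondary imprecision: your reduction ``it suffices to treat $w=\one$'' is not quite right as stated, since applying $w^{-1}$ turns flats meeting $wC$ into flats of the Shi deletion $\Shi\big(\Phi^+\setminus\Inv(w^{-1})\big)$ meeting $C$ (not of the full Shi arrangement), and the target is the set of antichains of that subposet; the paper therefore proves the dominant-cone statements for all such deletions and transports them with \Cref{lem:inversions} and \Cref{lem:inversionset}.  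Your appeal to \Cref{lem:inversions} can patch this bookkeeping, but it has to be carried out explicitly.
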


\begin{rem}[Connection to nonnesting partitions]
  Antichains in the root poset (that is, elements of $\Anti_e$ for the identity element~$e \in W$) are called \emph{nonnesting partitions}.
  \Cref{main-thm:intersection-bijection} shows that the map $A \mapsto \bigcap_{\beta\in A}H_{\beta,1}$ is injective.
  Athanasiadis and Reiner showed injectivity of the map $A \mapsto \bigcap_{\beta \in A} H_{\beta,0}$, see~\cite[Corollary~6.2]{athanasiadis-reiner}.
  Their result establishes the embedding of the \emph{nonnesting flats} into the intersection poset $\LL\big(\A(W)\big)$.
  Thus the above result may be seen as a \emph{lift} of the nonnesting flats into the affine intersection poset $\LL\big(\Shi_1(\Phi^+)\big)$ whose image are exactly the flats cutting through the dominant cone~$C$ (that is, flats in~$\LL_e$).
  Our proof of injectivity does \emph{not} rely on their argument, nor can we deduce their result from ours without reproducing their argument.
\end{rem}

\begin{rem}[Similarities to a poset of Biane and Josuat-Verg\'es]
In~\cite[Definition~4.1]{biane-josuat-verges}, Biane and Josuat-Verg\'es consider the partial order on \emph{noncrossing partitions}, which depends on a choice of a \emph{Coxeter element}, given by the intersection of the \emph{absolute order} and the \emph{Bruhat order}.
The rank sizes of their posets are also given by the Narayana numbers and all lower intervals are also Boolean lattices, compare \Cref{main-thm:set-cardinality-equality}.
In type~$A_4$, there is no choice of Coxeter element for which that poset is isomorphic to the interesection poset~$\LL_e$.
\end{rem}

Our third result is proved as part of the proof of \Cref{main-thm:intersection-bijection} and concerns the structure of $\LL_w$ as a poset ordered by reverse inclusion.
This poset is an order ideal of the intersection poset $\LL\big(\Shi(\Phi^+)\big)$ of the full arrangement and thus a meet-semilattice ranked by codimension, for which every lower interval $[V,X]\subseteq \LL_w$ is a geometric lattice.
The following theorem strengthens this property and shows that the M\"{o}bius function values of lower intervals are all equal to~$\pm 1$, implying that $\LL_w$ is Eulerian.

\begin{thm}
\label{main-thm:set-cardinality-equality}
  All lower intervals of the poset~$\LL_w$ are Boolean.
  Thus, $\mu(V,X) = (-1)^{\codim(X)}$ for all $X\in \LL_w$ and in particular
  \[
  \#\LL_w = \# \RR_w.
  \]
\end{thm}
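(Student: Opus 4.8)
The plan is to leverage the bijection $\psi_w : \LL_w \to \Anti_w$ from \Cref{main-thm:intersection-bijection} together with the explicit description of $\psi_w^{-1}$. Once we know all lower intervals $[V,X]$ of $\LL_w$ are Boolean, the formula $\mu(V,X) = (-1)^{\codim(X)}$ is immediate (the Möbius function of a Boolean lattice of rank $k$ is $(-1)^k$), and the cardinality identity $\#\LL_w = \#\RR_w$ follows by combining the bijections $\psi_w$ and $\varphi_w$ of the two previous theorems, or simply because $\#\LL_w = \#\Anti_w = \#\RR_w$. So the entire content is the Boolean claim, and this is where the main work lies.

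First I would fix $X \in \LL_w$ and let $A = \psi_w(X)$, an antichain in $P_w := \Phi^+\setminus\Inv(w^{-1})$, so that $X = \bigcap_{\beta\in wA} H_{\beta,1}$ and $\codim(X) = \#A$ (this last equality — that the hyperplanes $H_{\beta,1}$ for $\beta \in wA$ are linearly independent, equivalently that $wA$, hence $A$, is a linearly independent subset of $\Phi^+$ — should be extracted from the proof of \Cref{main-thm:intersection-bijection}; an antichain in the root poset is always linearly independent, which is the standard fact underlying the Athanasiadis–Reiner embedding). The interval $[V,X]$ in $\LL_w$ consists of those flats $Y \in \LL_w$ with $Y \supseteq X$. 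The strategy is to show this interval is isomorphic to the Boolean lattice $2^{A}$ via the map sending a subset $B \subseteq A$ to the flat $\bigcap_{\beta \in wB} H_{\beta,1}$. I would prove:
\begin{enumerate}
\item[(i)] for every $B \subseteq A$, the intersection $Y_B := \bigcap_{\beta\in wB}H_{\beta,1}$ lies in $\LL_w$, i.e.\ it meets $wC$;
\item[(ii)] conversely, every flat $Y \in \LL_w$ with $Y \supseteq X$ has the form $Y_B$ for a (necessarily unique) subset $B \subseteq A$.
\end{enumerate}

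For (i), I would observe that $B$ is itself an antichain in $P_w$ (a subset of an antichain is an antichain), so $\psi_w^{-1}(B) = Y_B \in \LL_w$ by \Cref{main-thm:intersection-bijection} — nothing more is needed. For (ii), the key point is that if $Y \in \LL_w$ and $Y \supseteq X$, then writing $Y = \bigcap_{\beta \in S} H_{\beta,1}$ for some $S \subseteq \Phi^+$ (we may take $S$ maximal, i.e.\ $S = \{\beta : Y \subseteq H_{\beta,1}\}$), the antichain $\psi_w(Y) = w^{-1}(S)$ must be contained in $A = \psi_w(X)$: indeed $Y \supseteq X$ forces $S \supseteq \{\beta : X \subseteq H_{\beta,1}\} \supseteq wA$ — wait, the inclusion of flats reverses, so actually $Y \supseteq X$ gives $S \subseteq \{\beta : X \subseteq H_{\beta,1}\}$, and I need to identify the latter set with $wA$. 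This is exactly the statement that $A$ is recovered from $X$ as $\{w^{-1}(\beta) : X \subseteq H_{\beta,1}\}$, which is the definition of $\psi_w(X)$; so $\{\beta : X \subseteq H_{\beta,1}\} = wA$ and hence $S \subseteq wA$, giving $w^{-1}(S) \subseteq A$ — but I must also check $w^{-1}(S)$ is an antichain, which holds because $w^{-1}(S) = \psi_w(Y)$ is an antichain by \Cref{main-thm:intersection-bijection}. Setting $B = w^{-1}(S)$ yields $Y = Y_B$. Finally, $B \mapsto Y_B$ is order-reversing and injective on $2^A$ (injectivity because $\psi_w$ is a bijection and $\psi_w(Y_B) = B$), hence an anti-isomorphism $2^A \cong [V,X]$; composing with complementation shows $[V,X]$ is Boolean of rank $\#A = \codim(X)$.

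The main obstacle I anticipate is the bookkeeping around which direction the inclusions go and, more substantively, confirming that $\codim(X) = \#\psi_w(X)$ — i.e.\ that the defining hyperplanes $\{H_{\beta,1} : \beta \in wA\}$ are in general position (linearly independent normals). If the proof of \Cref{main-thm:intersection-bijection} does not already establish this, I would supply it separately: an antichain $A$ in the root poset is linearly independent over $\R$ (a classical fact, since any subset of $\Phi^+$ that is an antichain in the root poset spans a face of the root polytope / is part of a "filtration" argument, or more concretely by induction on $\max\{\langle v,\beta\rangle\}$ using that cover relations add simple roots), and $w$ is orthogonal so $wA$ is linearly independent too; hence the $\#A$ affine hyperplanes $H_{\beta,1}$, $\beta\in wA$, intersect in an affine subspace of codimension exactly $\#A$. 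Everything else is formal manipulation of the two bijections already in hand.
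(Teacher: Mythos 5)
The heart of your argument---identifying the lower interval $[V,X]$ with the subsets of the antichain $A=\psi_w(X)$ and getting $\codim(X)=\#A$ from the linear independence of antichains (Proposition~\ref{prop:sommers})---is in substance the paper's own proof (Claim~\ref{clm:3}, applied to $E=\Phi^+\setminus\Inv(w^{-1})$ and transported by $w$). But as written your proof is circular relative to the paper's logic, and the circularity hits exactly the nontrivial point. You use Theorem~\ref{main-thm:intersection-bijection} as a black box: in step (i) (well-definedness of $\psi_w^{-1}$), for the injectivity of $B\mapsto Y_B$, and above all for the final count $\#\LL_w=\#\Anti_w=\#\RR_w$. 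In the paper, however, the bijectivity of $\psi_w$ is \emph{derived from} the present theorem: only well-definedness and injectivity of $\psi_w$ are proved directly (Claim~\ref{clm:2}), while surjectivity---equivalently, the fact that $\bigcap_{\beta\in wA}H_{\beta,1}$ really meets $wC$ for every antichain $A$---is obtained by counting, namely from the Boolean property together with Zaslavsky's dissection theorem $\#\RR_w=\sum_{X\in\LL_w}|\mu(V,X)|$ \cite{zaslavsky77} and the region bijection of Theorem~\ref{thm:1}. So deducing $\#\LL_w=\#\RR_w$ from Theorem~\ref{main-thm:intersection-bijection} runs the argument backwards, and you give no independent proof of the one genuinely hard input (that every antichain's flat cuts the cone). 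Zaslavsky's theorem, which is precisely the missing link, never appears in your proposal.

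The Boolean part is easy to decircularize, and then it coincides with the paper's proof: for $B\subseteq A$ the flat $Y_B=\bigcap_{\beta\in wB}H_{\beta,1}$ contains $X$, hence automatically meets $wC$, so $Y_B\in\LL_w$ without invoking $\psi_w^{-1}$; distinctness of the $Y_B$ and $\codim(X)=\#A$ follow from Sommers' linear independence; and any $Y\in\LL_w$ with $Y\supseteq X$ equals the intersection of the hyperplanes of $\Shi(\Phi^+)$ containing it, none of which is an $H_{\beta,0}$ (since $Y$ meets the open cone $wC$) and all of which contain $X$, so $Y=Y_B$ for some $B\subseteq A$---do spell out this exclusion of the hyperplanes of $\Shi_0(\Phi^+)$, which you glossed over when writing $Y=\bigcap_{\beta\in S}H_{\beta,1}$. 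With $[V,X]$ Boolean and $\mu(V,X)=(-1)^{\codim(X)}$ in hand, the cardinality identity then needs Zaslavsky's formula (or some other argument independent of Theorem~\ref{main-thm:intersection-bijection}); once you add that, your proof becomes essentially the paper's.
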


The next results follow immediately from the above.
We state them here, as they interpret and generalize previously-studied combinatorial properties of Shi arrangements.
The \Dfn{Poincar\'{e} polynomial} of the cone\footnote{Although we give a combinatorial description here, this really is the Poincaré polynomial of a topological space.
The space is described in \cite[Theorem 1.1]{dbpw} and the connection to posets is described in \cite[Main Theorem]{dorpalenbarry}.} $wC$ is
\[
  \Poin(wC,t) =  \sum_{X\in \LL_w}|\mu(V,X)| ~ t^{\codim{X}} = \sum_{k \geq 0} \whit{k}{wC}\cdot t^k,
\]
and the coefficients $\whit{k}{wC}$ are its \Dfn{Whitney numbers}%
.
Thus \Cref{main-thm:set-cardinality-equality} and \Cref{main-thm:chamber-bijection} give the following corollary.

\begin{coro}
\label{main-thm:whitney-numbers}
  The $k$\th Whitney number of~$wC$ is
  \begin{align*}
    \whit{k}{wC}
    &= \# \big\{ A\in\Anti_w \mid \# A = k \big\} \\
    &= \# \big\{ R \in \RR_w \mid \# \ceil(R) = k \big\}.
  \intertext{
    In particular,
  }
    \Poin(wC,1) &= \#\Anti_w = \#\RR_w
  \intertext{
    and summing over all $w \in W$ yields
  }
    \sum_{w \in W} \Poin(wC,1) &= (h+1)^\ell\,.
  \end{align*}
\end{coro}

%

The \Dfn{$W$-Narayana numbers} are a refinement of the $W$-Catalan numbers and one way to define these is by counting antichains in the root poset by cardinality, i.e.,
\[
  \Nar^k(\Phi^+) = \# \big\{ A\in\Anti_\one \mid \# A = k \big\}\,.
\]
\Cref{main-thm:whitney-numbers} thus gives another interpretation of the Narayana numbers in terms of the Whitney numbers of the dominant cone.

\begin{coro}
  We have
  \(
    \Nar^{k}(\Phi^+) = \whit{k}{C}\,.
  \)
\end{coro}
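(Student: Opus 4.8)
The plan is to specialize \Cref{main-thm:whitney-numbers} to the identity element $w=\one$. First I would recall the definition just stated: the Narayana number $\Nar^k(\Phi^+)$ is by definition the number of antichains $A \in \Anti_\one$ of the root poset with $\#A = k$, where $\Anti_\one$ is the set of antichains of the subposet $\Phi^+ \setminus \Inv(\one^{-1}) = \Phi^+ \setminus \Inv(\one) = \Phi^+$, i.e.\ of the full root poset.

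Next I would invoke \Cref{main-thm:whitney-numbers} with $w = \one$. Since $\one C = C$, that corollary reads
\[
  \whit{k}{C} = \#\{ A \in \Anti_\one \mid \#A = k\}\,.
\]
Comparing the two displayed expressions term by term immediately yields $\Nar^k(\Phi^+) = \whit{k}{C}$, which is the claim.

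There is essentially no obstacle here: the corollary is a pure specialization, and all the work has already been done in establishing \Cref{main-thm:set-cardinality-equality} (hence \Cref{main-thm:whitney-numbers}) together with the recollection that antichains of the root poset are counted by the Narayana numbers \cite{athanasiadis-catalan-nums}. If one wished, one could additionally remark that summing over $k$ recovers the classical statement that the dominant regions of $\Shi(\Phi^+)$ are counted by the Catalan number $\prod_{i=1}^{\ell}(d_i+h)/d_i$, via $\sum_k \whit{k}{C} = \#\RR_\one = \#\Anti_\one$, but this is not needed for the stated corollary.
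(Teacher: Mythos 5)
Your proposal is correct and matches the paper's (implicit) argument exactly: the corollary is the specialization of \Cref{main-thm:whitney-numbers} to $w=\one$, compared with the definition $\Nar^k(\Phi^+)=\#\{A\in\Anti_\one \mid \#A=k\}$ given just before the statement. Nothing further is needed.
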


\begin{rem}[$m$-eralizations]
  The Shi arrangement has a well-known \emph{$m$-eralization} given by
  \[
    \Shi^{(m)}(\Phi^+) = \bigcup_{-m < k \leq m} \Shi_k(\Phi^+)\,.
  \]
  Its region count is the \Dfn{Fuss-parking function number} $(mh+1)^\ell$ and its dominant region count is the \Dfn{Fuss-Catalan number} $\prod_{i=1}^\ell (d_i + mh)/{d_i}$ as given in~\cite[Theorem~1.1]{yoshinaga}.
  There is also an $m$-eralization of antichains by Athanasiadis~\cite{athanasiadis-catalan-nums}.
  The results presented here do not immediately suggest an $m$-eralized bijection, as we can already see in rank $2$.
  In type $A_2$ with $m=2$, the dominant cone~$C$ of $\Shi^{(2)}(\Phi^+)$ is
  \begin{center}
  \begin{tikzpicture}[scale=1.3]
    \path[-] (0,0) edge [thick] (  0:5cm);
    \path[-] ($(0,0) + (0.7,1.22) $) edge [thick] ($(0  :5cm) + (0,1.22) $);
    \path[-] ($(0,0) + (1.4,2.44) $) edge [thick] ($(0  :5cm) + (0,2.44) $);

    \path[-] (0,0) edge [thick] ( 60:3.6cm);
    \path[-] ($(0,0) + (1.41,0) $) edge [thick] ($(60:3.6cm) + (1.41,0) $);
    \path[-] ($(0,0) + (2.82,0) $) edge [thick] ($(60:3.6cm) + (2.82,0) $);

    \path[-] ($(0,0) + (1.41,0) $) edge [thick] ($(120:1.41cm) + (1.41,0) $);
    \path[-] ($(0,0) + (2.82,0) $) edge [color=violet, thick, dashed] ($(120:2.82cm) + (2.82,0) $);

    \node at (2.12,1.21)[circle,fill,inner sep=3pt]{};

    \fill[violet!60,nearly transparent] (1.41,0) -- (2.82,0) -- (2.1,1.25) -- cycle;
    \fill[violet!60,nearly transparent] (0.7,1.23) -- (2.1,1.23) -- (1.41,2.41) -- cycle;
  \end{tikzpicture}
  \end{center}
  The two shaded regions have the same single-element ceiling set, $H_{\alpha+\beta,2}$ (dashed).
  Moreover, the three hyperplanes
  \[
    H_{\alpha,1} \cap H_{\beta,1} \cap H_{\alpha+\beta,2} = \{\bullet\}
  \]
  meet at a zero-dimensional flat with absolute M\"obius function value~$|\mu(V,\bullet)| = 2$.
  Thus we can have neither a bijection between regions in the dominant cone and certain antichain generalizations defined via ceiling sets (indeed, the bijection in~\cite{athanasiadis-catalan-nums} uses more information than only ceilings), nor can we have a bijection between flats cutting through~$C$ and these dominant regions.
  In particular, the above flat is the unique flat with absolute M\"obius function value not equal to~$1$ and the number of flats inside~$C$ is~$11$ while there are~$12$ dominant regions.

  Although we do not define them here, the Fuss-Catalan numbers are refined by \emph{Fuss-Narayana numbers} and one can check that the Whitney number distribution matches Fuss-Narayana distribution in rank~$2$ (for all~$m$).
  This pattern fails in higher ranks as, for example, the Poincar\'{e} polynomial of the dominant cone in type $A_3$ with $m=2$ is $1 + 12t + 29t^2 + 13t^3$ while the corresponding Fuss-Narayana polynomial is~$1 + 12t + 28t^2 + 14t^3$.
\end{rem}

After a detailed example in Section \ref{sec:longexample}, the remainder of this paper is organized as follows. 
In \Cref{sec:deleted1}, we extend and prove all above main results for the dominant cone~$C$ and \emph{certain} subarrangements of the Shi arrangement, namely those containing the reflection arrangement.
We then deduce the main results from that setup in \Cref{sec:deduction}.
In \Cref{sec:algebra}, we finally interpret the above Poincar\'e polynomials as the Hilbert series of two isomorphic graded rings, one arising from the Varchenko-Gel'fand ring and another, which we call the order ring of a finite poset since it turns out to be naturally associated to the order polytope.

\subsection{The results in action}
\label{sec:longexample}

We close this introduction with a detailed illustration of the main results in the example of type~$B_2$.
The simple and positive roots are
\[
  \Delta = \{\alpha, \beta\} \subseteq \Phi^+ = \{\alpha, \beta, \alpha+\beta, 2\alpha+\beta\}.
\]
We write $s = s_\alpha$ and $t = s_\beta$ to be the reflections orthogonal to~$\alpha$ and to~$\beta$, respectively.
One may realize this root system in the standard basis $\{e_1,e_2\}$ of $\R^2$ by $\alpha = e_2, \beta = e_1 - e_2$.
In this realization, the arrangements $\A(W)$ (left) and $\Shi(\Phi^+)$ (right) are
\begin{center}
 \begin{tikzpicture}[scale=1.3]
    \coordinate (a0+) at (3,0){};
    \coordinate (a0-) at (-2,0){};
    \coordinate (a1+) at (3,1){};
    \coordinate (a1-) at (-2,1){};

    \coordinate (b0+) at (3,3){};
    \coordinate (b0-) at (-2,-2){};
    \coordinate (b1+) at (3,2){};
    \coordinate (b1-) at (-1,-2){};

    \coordinate (ab0+) at (0,3){};
    \coordinate (ab0-) at (0,-2){};
    \coordinate (ab1+) at (1,3){};
    \coordinate (ab1-) at (1,-2){};

    \coordinate (2ab0+) at (-2,2){};
    \coordinate (2ab0-) at (2,-2){};
    \node[invisivertex] (2ab1+) at (-2,3){};
    \coordinate (2ab1-) at (3,-2){};

    \node[invisivertex] at (22.5:1.5cm){$e$};
    \node[invisivertex] at (67.5:1.5cm){$t$};
    \node[invisivertex] at (112.5:1.5cm){$ts$};
    \node[invisivertex] at (157.5:1.5cm){$tst$};
    \node[invisivertex] at (202.5:1.5cm){$tsts$};
    \node[invisivertex] at (247.5:1.5cm){$sts$};
    \node[invisivertex] at (292.5:1.5cm){$st$};
    \node[invisivertex] at (337.5:1.5cm){$s$};

    \path[-] (a0+) edge [color=violet, thick] node[above] {} (a0-);
    \path[-] (b0+) edge [color=violet, thick] node[above] {} (b0-);
    \path[-] (ab0+) edge [color=violet, thick] node[above] {} (ab0-);
    \path[-] (2ab0+) edge [color=violet, thick] node[above] {} (2ab0-);
    \fill[violet!60,nearly transparent] (0,0) -- (3,0) -- (3,3) -- cycle;
    \fill[violet!60,nearly transparent] (0,0) -- (0,-2) -- (2,-2) -- cycle;
  \end{tikzpicture}
  \qquad
  \begin{tikzpicture}[scale=1.3]
    \coordinate (a0+) at (3,0);
    \coordinate (a0-) at (-2,0);
    \coordinate (a1+) at (3,1);
    \coordinate (a1-) at (-2,1);
    \coordinate (a1++) at (3,1);
    \coordinate (a1--) at (1,1);

    \coordinate (b0+) at (3,3);
    \coordinate (b0-) at (-2,-2);
    \coordinate (b1+) at (3,2);
    \coordinate (b1-) at (-1,-2);
    \coordinate (b1++) at (3,2);
    \coordinate (b1--) at (1,0);
    \coordinate (b1+++) at (0.5,-0.5);
    \coordinate (b1---) at (0,-1);

    \coordinate (ab0+) at (0,3);
    \coordinate (ab0-) at (0,-2);
    \coordinate (ab1+) at (1,3);
    \coordinate (ab1-) at (1,-2);
    \coordinate (ab1++) at (1,1);
    \coordinate (ab1--) at (1,0);
    \coordinate (ab1+++) at (1,-1);
    \coordinate (ab1---) at (1,-2);

    \coordinate (2ab0+) at (-2,2);
    \coordinate (2ab0-) at (2,-2);
    \coordinate (2ab1+) at (-2,3);
    \coordinate (2ab1-) at (3,-2);
    \coordinate (2ab1++) at (0.5,0.5);
    \coordinate (2ab1--) at (1,0);

    \node[invisivertex] at (-1.3,-1.7){$H_\beta$};
    \node[invisivertex] at (-2.1,0.3){$H_\alpha$};
    \node[invisivertex] at (-2.1, 2.3){$H_{2\alpha+\beta}$};
    \node[invisivertex] at (0.45, 2.5){$H_{\alpha+\beta}$};

    \draw[-,color=violet, thick] (a0+) -- (a0-);
    \path[-] (a1+) edge [dashed] node[above] {} (a1-);
    \path[-] (a1++) edge [thick] node[above] {} (a1--);
    \path[-] (b0+) edge [color=violet, thick] node[above] {} (b0-);
    \path[-] (b1+) edge [dashed] node[above] {} (b1-);
    \path[-] (b1++) edge [thick] node[above] {} (b1--);
    \path[-] (b1+++) edge [thick] node[above] {} (b1---);
    \path[-] (ab0+) edge [color=violet, thick] node[above] {} (ab0-);
    \path[-] (ab1+) edge [dashed] node[above] {} (ab1-);
    \path[-] (ab1++) edge [thick] node[above] {} (ab1--);
    \path[-] (ab1+++) edge [thick] node[above] {} (ab1---);
    \path[-] (2ab0+) edge [color=violet, thick] node[above] {} (2ab0-);
    \path[-] (2ab1+) edge [dashed] node[above] {} (2ab1-);
    \path[-] (2ab1++) edge [thick] node[above] {} (2ab1--);
    \fill[violet!60,nearly transparent] (0,0) -- (3,0) -- (3,3) -- cycle;
    \fill[violet!60,nearly transparent] (0,0) -- (0,-2) -- (2,-2) -- cycle;
  \end{tikzpicture}
\end{center}
where we shaded the fundamental cone~$C$ and the cone~$stC$ and labelled all cones~$wC$ of the reflection arrangement by the elements $w \in W$.
The root poset~$\Phi^+$ (left) and its subposet on $E = \{\alpha,2\alpha+\beta\}$ (right) are
\begin{center}
  \begin{tikzpicture}[scale=1]
    \node[invisivertex] (alpha) at (0,0){$\alpha$};
    \node[invisivertex] (beta) at (2,0){$\beta$};
    \node[invisivertex] (alpha+beta) at (1,1){$\alpha + \beta$};
    \node[invisivertex] (2alpha+beta) at (1,2){$2\alpha + \beta$};

    \path[-] (alpha) edge [] node[above] {} (alpha+beta);
    \path[-] (beta) edge [] node[above] {} (alpha+beta);
    \path[-] (alpha+beta) edge [] node[above] {} (2alpha+beta);
  \end{tikzpicture}
  \hspace{2cm}
  \begin{tikzpicture}[scale=1]
    \node[invisivertex] (alpha) at (0,0){$\alpha$};
    \node[invisivertex] (2alpha+beta) at (0,1){$2\alpha + \beta$};
    \path[-] (alpha) edge [] node[above] {} (2alpha+beta);
  \end{tikzpicture}
\end{center}
We observe that~$E = \Phi^+\setminus\Inv(ts)$ by checking that $\Inv(ts) = \{\beta,\alpha+\beta\}$ is given by the roots corresponding to the two hyperplanes separating~$tsC$ from~$C$.
Next we illustrate the main results in these two settings, i.e., for the elements $e,st=(ts)^{-1} \in W$.
We have
\[
  \Anti_e = \big\{ \emptyset, \{\alpha\},\{\beta\},\{\alpha,\beta\},\{\alpha+\beta\},\{2\alpha+\beta\}\big\},\quad
  \Anti_{st} = \big\{\emptyset,\{\alpha\},\{2\alpha+\beta\}\big\}\,.
\]
\Cref{main-thm:chamber-bijection} now yields that the ceilings of the regions inside the cones~$C$ and~$stC$ are given by~$\one\Anti_\one = \Anti_\one$ and~$st\Anti_{st} = \big\{\emptyset,\{\beta\},\{\alpha+\beta\}\big\}$, respectively.
The cones~$C$ and~$stC$ are shaded above and one can check that the regions inside these cones have ceiling sets contained in~$\Anti_\one$ and~$st\Anti_{st}$, respectively.
Next, \Cref{main-thm:intersection-bijection} gives a bijection between $\Anti_w$ and $\LL_w$.
The intersection posets $\LL_\one$ and $\LL_{st}$ are drawn below, on the left and right respectively.
\begin{center}
  \begin{tikzpicture}[scale=1]
    \node[invisivertex] (V) at (2.25,0){$V$};
    \node[invisivertex] (alpha) at (0,1.5){$H_{\alpha,1}$};
    \node[invisivertex] (beta) at (1.5,1.5){$H_{\beta,1}$};
    \node[invisivertex] (alpha+beta) at (3,1.5){$H_{\alpha + \beta,1}$};
    \node[invisivertex] (2alpha+beta) at (4.5,1.5){$H_{2\alpha + \beta,1}$};
    \node[invisivertex] (X) at (.75,3){$H_{\alpha,1} \cap H_{\beta,1}$};

    \path[-] (V) edge [] node[above] {} (alpha);
    \path[-] (V) edge [] node[above] {} (beta);
    \path[-] (V) edge [] node[above] {} (alpha+beta);
    \path[-] (V) edge [] node[above] {} (2alpha+beta);
    \path[-] (alpha) edge [] node[above] {} (X);
    \path[-] (beta) edge [] node[above] {} (X);
  \end{tikzpicture}
  \qquad
  \begin{tikzpicture}[scale=1]
    \node[invisivertex] (V) at (.75,0){$V$};
    \node[invisivertex] (beta) at (0,1.5){$H_{\beta,1}$};
    \node[invisivertex] (alpha+beta) at (1.5,1.5){$H_{\alpha + \beta,1}$};
    \path[-] (V) edge [] node[above] {} (beta);
    \path[-] (V) edge [] node[above] {} (alpha+beta);
  \end{tikzpicture}
\end{center}

The six elements of $\LL_\one$ correspond to the antichains of $\Phi^+ = \Phi^+\setminus\Inv(\one)$, and the three elements of $\LL_{st}$ correspond, after applying the element~$st \in W$, to the antichains of $\Phi^+ \setminus \Inv(ts)$.
Their Poincar\'{e} polynomials are
\[
  \Poin(C,t) = 1 + 4t + t^2,\quad
  \Poin(stC,t) = 1 + 2t\,.
\]
In particular, the Whitney numbers of~$C$--that is, the coefficients of $\Poin(C,t)$--are precisely the type~$B_2$ Narayana numbers $(1,4,1)$.
We sum the Poincar\'e polynomials of all chambers to obtain
\begin{multline*}
  (1 + 4t + t^2) + (1 + 3t) + (1 + 2t) + (1 + t) + 1 + (1 + t) + (1 + 2t) + (1 + 3t) \\
  = 8 + 16t + t^2\, .
\end{multline*}
Evaluating at $t=1$ gives the total number $25 = 5^2$ of Shi regions.
Note that this is not the Poincar\'{e} polynomial of the full arrangement $\Shi(\Phi^+)$, which is $1 + 8t + 16t^2$.
In general, the constant term of the sum of Poincar\'{e} polynomials is the cardinality of the group (since each Poincar\'e polynomial for $\LL_w$ as constant term~$1$), whereas the Poincar\'{e} polynomial of the full arrangement always has constant term~$1$.

\section{Proofs of main results}
\label{sec:deleted}

In this section, we further generalize the main results to certain subarrangements of the Shi arrangement and consider the situation inside the dominant cone.
We then show in \Cref{sec:deduction} how these imply the results for other cones.

\subsection{Generalized results for Shi deletions}
\label{sec:deleted1}
Let $E \subseteq \Phi^+$ be a subposet and define the $E$-subarrangement
\[
  \Shi(E) = \Shi_0(\Phi^+) \cup \big\{ H_{\beta,1} \mid \beta \in E \big\} \subseteq \Shi(\Phi^+)\,.
\]
Shi deletions were considered by Armstrong and Rhoades under the name \emph{graphical Shi arrangements}~\cite{armstrong-rhoades}.
Let
\begin{itemize}
  \item $\RR_E$ be the regions of $\Shi(E)$ which lie inside the dominant cone $C$, i.e.,
  \[
  \big\{ R \in \RR\big(\Shi(E)\big) \mid R \subseteq C \big\},
  \]
  \item $\LL_E$ be the set of intersections of hyperplanes in~$\Shi(E)$ with nonempty intersection with $C$, i.e.,
  \[
  \big\{ X \in \LL\big(\Shi(E)\big) \mid X \cap C \neq \emptyset\big\}, \quad \text{and}
  \]
  \item $\Anti_E$ be the set of antichains of roots in~$E \subseteq \Phi^+$ (in root poset order).
\end{itemize}

Next we cast the main theorems into this framework.

\begin{thm}
\label{thm:1}
  The map
  \begin{align*}
    \varphi_E : \RR_E &\longrightarrow \Anti_E \\
    R &\longmapsto \ceil(R)
    \intertext{is a bijection with inverse}
    \varphi_E^{-1} : \Anti_E &\longrightarrow \RR_E \\
    A &\longmapsto \left\{
      v \in C ~\middle\vert~
      \begin{matrix}
      0 < \langle v, \gamma \rangle < 1 \text{ for } \gamma\in \ideal_E(A)\\[3pt]
      1 < \langle v, \gamma \rangle\hspace*{22pt} \text{ for } \gamma\notin \ideal_E(A)
      \end{matrix}
    \right\}\,,
  \end{align*}
  where
  \[\ideal_E(A) = \{ \gamma \in E \mid \gamma \prec \beta \text{ for some } \beta \in A\}\,.\]
\end{thm}

First observe that for $E = \Phi^+$, this map was given by Postnikov~\cite[Remark 2]{reiner97}.
We encode this in the following proposition, for later reference.

\begin{prop}[Postnikov~{\cite[Remark~2]{reiner97}}]
\label{prop:Postnikov}
  \Cref{thm:1} holds for $E = \Phi^+$.
\end{prop}

The more general case above can be deduced from this known case as follows.

\begin{clm}
\label{clm:1}
  The map~$\varphi_E: \RR_E \rightarrow \Anti_E$ is well-defined.
\end{clm}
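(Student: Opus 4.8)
The plan is to unpack what ``well-defined'' means here: we must check that for every $R \in \RR_E$ the set $\ceil(R)$ is (i) contained in $E$ and (ii) an antichain of the root poset, so that it really lands in $\Anti_E$. Part (i) is pure bookkeeping. The hyperplanes of $\Shi(E)$ are the reflecting hyperplanes $H_{\delta,0}$ for $\delta \in \Phi^+$ together with the affine hyperplanes $H_{\beta,1}$ for $\beta \in E$, and no $H_{\delta,0}$ equals any $H_{\gamma,1}$ since the origin lies on the former but not the latter. By definition every element of $\ceil(R)$ is indexed by a facet-defining hyperplane of $R$ of the shape $H_{\gamma,1}$, so it must come from the second family, i.e.\ $\ceil(R)\subseteq E$. (Also, comparability in $E$ is the same as comparability in $\Phi^+$, since $E$ carries the induced order, so in (ii) it does not matter whether we test the antichain condition in $E$ or in $\Phi^+$.)

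For (ii) I would argue by contradiction, using the one standard fact about arrangements that a point in the relative interior of a facet of a region lies on exactly one hyperplane of the arrangement. Suppose $\beta,\gamma\in\ceil(R)$ are comparable; without loss of generality $\beta\prec\gamma$, so $\gamma-\beta$ is a nonzero nonnegative integer combination of simple roots. Both $H_{\beta,1}$ and $H_{\gamma,1}$ are then facet-defining for $R$, and $R$ lies on the negative side of each. Pick a point $p$ in the relative interior of the facet $\overline R\cap H_{\beta,1}$, so that $\langle p,\beta\rangle=1$. By the standard fact $p$ lies on no hyperplane of $\Shi(E)$ other than $H_{\beta,1}$; in particular $p\notin H_{\gamma,1}$, and since $p\in\overline R\subseteq\overline{H_{\gamma,1}^-}$ this forces the strict inequality $\langle p,\gamma\rangle<1$. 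On the other hand $R\subseteq C$ gives $p\in\overline C$, hence $\langle p,\alpha\rangle\ge 0$ for every simple root $\alpha$ and therefore $\langle p,\gamma-\beta\rangle\ge 0$. Then
\[
  \langle p,\gamma\rangle=\langle p,\beta\rangle+\langle p,\gamma-\beta\rangle=1+\langle p,\gamma-\beta\rangle\ge 1,
\]
contradicting $\langle p,\gamma\rangle<1$. Hence no two elements of $\ceil(R)$ are comparable, $\ceil(R)\in\Anti_E$, and $\varphi_E$ is well-defined.

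The crux of the argument is to test the inequality at a generic point of the facet coming from the \emph{smaller} root $\beta$ rather than from $\gamma$: this is exactly what lets the dominance inequality $\langle p,\gamma-\beta\rangle\ge 0$ push $\langle p,\gamma\rangle$ up to at least $1$, while the facet hypothesis on $H_{\gamma,1}$ simultaneously keeps it strictly below $1$. I expect the only real care needed is in invoking correctly that relative-interior points of a facet are simple points of the arrangement (so that $p\notin H_{\gamma,1}$) and in recording that $p$ inherits $p\in\overline C$ from $R\subseteq C$; everything else is immediate from the definitions of $\ceil(R)$ and of the root poset.
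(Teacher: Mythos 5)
Your proof is correct and takes essentially the same route as the paper: in both arguments the heart is writing the difference of two comparable roots as a nonnegative integer combination of simple roots and pairing it against a point coming from the dominant cone, so that $\langle p,\gamma\rangle \geq \langle p,\beta\rangle$ forces the smaller root's affine hyperplane to stay below level $1$. The only (minor) difference is where the strict inequality comes from: the paper bounds $\langle \gamma, v\rangle < 1$ for all $v\in\overline{R}\cap C$ using openness of $C$, whereas you evaluate at a relative-interior point of the facet in $H_{\beta,1}$ and get strictness from the standard fact that such a point lies on no other hyperplane of the arrangement --- a legitimate variant that works with the closed cone $\overline{C}$ instead.
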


\begin{proof}
  Let $R \in \RR_E$.
  We first observe that $\ceil(R)$ is a subset of~$E$ by construction.
  Thus we only need to show that $\ceil(R)$ is an antichain in the root poset.
  To this end, let $\beta \in \ceil(R)$ and let $\gamma \in \Phi^+$ such that $\gamma \prec \beta$.
  Then $\beta - \gamma = \sum_{\alpha\in\Delta} c_\alpha \alpha$ where $c_\alpha \geq 0$ are integers and not all $c_\alpha = 0$.
  Then for all $v\in \overline{R} \cap C$, we have
  \begin{align*}
    \langle \gamma, v\rangle
     & = \langle  \gamma - \beta + \beta, v\rangle
     = \langle  \beta, v\rangle - \langle  \beta - \gamma, v\rangle
    \leq ~ 1 - \sum_{\alpha\in\Delta} c_\alpha ~ \langle \alpha, v\rangle < 1,
  \end{align*}
  where the final inequality uses that $\langle \beta,v\rangle \leq 1$ since $R \subset H_{\beta,1}^-$ because $\beta \in \ceil(R)$ and also that $\langle\alpha,v\rangle > 0$ for all~$\alpha\in\Delta$, since $v \in \overline{R} \subseteq C$.
  The inequality $\langle\gamma,v\rangle < 1$ is strict, preventing the hyperplane $H_{\gamma,1}$ to be facet-defining for the region~$R$, $\gamma \notin \ceil(R)$.
\end{proof}

\begin{prop}
\label{prop:identifyregion}
  Let $R \in \RR_E$ and $A = \ceil(R)$.
  Let $R' = \varphi_{\Phi^+}^{-1}(A) \in \RR_{\Phi^+}$ be the unique dominant region of the Shi arrangement $\Shi(\Phi^+)$ with ceiling set~$A$.
  Then $R' \subseteq R$.
\end{prop}

\begin{proof}
\changes{  First note that $A = \ceil(R)$ is an antichain in~$E$ and thus also an antichain in~$\Phi^+$.
\Cref{prop:Postnikov} implies that the unique dominant region~$R' = \varphi_{\Phi^+}^{-1}(A)$ exists.
}

  Let now $H_{\beta_1,1},\dots,H_{\beta_k,1},H_{\gamma_1,1},\dots,H_{\gamma_\ell,1},H_{\alpha_1,0},\dots,H_{\alpha_m,0}$ be the facet-defining hyperplanes of the region~$R$ for $\beta_1,\dots,\beta_k,\gamma_1,\dots,\gamma_\ell \in E$ and $\alpha_1,\dots,\alpha_m \in \Delta$, so that
  \begin{equation}
  \label{eq:Rfactets}
  \tag{$\star$}
    R = H_{\beta_1,1}^- \cap \dots \cap H_{\beta_k,1}^-\quad \cap\quad H_{\gamma_1,1}^+ \cap \dots \cap H_{\gamma_\ell,1}^+\quad \cap\quad H_{\alpha_1,0}^+ \cap \dots \cap H_{\alpha_m,0}^+\,.
  \end{equation}
  We aim to show that the dominant Shi region~$R'$ is on the same side for each of these hyperplanes to conclude $R' \subseteq R$.
 Note that we chose $R'$ so that $\ceil(R) = \ceil(R') = \{\beta_1,\dots,\beta_k\}$.
 Thus
  \[
    R' \subseteq H_{\beta_1,1}^- \cap \dots \cap H_{\beta_k,1}^-\,.
  \]
  Since $R'\subseteq C \subseteq H_{\alpha_1,0}^+ \cap \dots \cap H_{\alpha_m,0}^+$, it remains to show that
  \begin{equation}
  \label{eq:RRprime}
  \tag{$\star\star$}
    R' \subseteq H_{\gamma_1,1}^+ \cap \dots \cap H_{\gamma_\ell,1}^+\,.
  \end{equation}
  The proof of \Cref{clm:1} implies for $\gamma \in \Phi^+$ that
  \begin{align*}
    \gamma \in \ideal_{\Phi^+}(A) &\Longrightarrow 
    R \subseteq H_{\gamma,1}^-\,.
  \intertext{
  We in particular obtain for $\gamma \in \Phi^+$ that
  }
    R \subseteq H_{\gamma,1}^+ &\Longrightarrow \gamma \in \Phi^+ \setminus \ideal_{\Phi^+}(A)\,.
  \intertext{
  Since we have $R \subseteq H_{\gamma_1,1}^+ \cap \dots \cap H_{\gamma_\ell,1}^+$ from~\eqref{eq:Rfactets}, we obtain $\gamma_1,\dots,\gamma_\ell \in \Phi^+ \setminus \ideal_{\Phi^+}(A)$.
  Since the map $\varphi_{\Phi^+}^{-1}$ is well-defined and bijective by \Cref{prop:Postnikov} and we obtain
  }
    \gamma \in \Phi^+ \setminus \ideal_{\Phi^+}(A) &\Longrightarrow R' \subseteq H_{\gamma,1}^+\,.
  \end{align*}
  This establishes~\eqref{eq:RRprime} and we conclude that $R' \subseteq R$.
\end{proof}

\begin{clm}
  \label{clm:phiinjective}
  The map~$\varphi_E: \RR_E \rightarrow \Anti_E$ is injective.
\end{clm}

\begin{proof}
  \changes{Let $R_1,R_2 \in \RR_E$ such that $\ceil(R_1) = \ceil(R_2)$ and set \[S = \varphi_{\Phi^+}(\ceil(R_1)) = \varphi_{\Phi^+}(\ceil(R_2))\,.\]
  Note that $S$ is nonempty.
  From \Cref{prop:identifyregion}, we have $S \subseteq R_1$ and $S \subseteq R_2$.
  In particular, $S\subseteq R_1 \cap R_2$ and thus $R_1 \cap R_2$ is nonempty.
  Since $R_1$ and $R_2$ are regions of the same hyperplane arrangement and have nonempty intersection, they must coincide.}
\end{proof}

\begin{clm}
\label{clm:phiinvwelldefined}
  The map~$\varphi_E^{-1} : \Anti_E \rightarrow \RR_E$ is well-defined.
\end{clm}

\begin{proof}
  Let~$A \in \Anti_E$.
  Then also $A \in \Anti_{\Phi^+}$.
  By \Cref{prop:Postnikov}, there exists a point~$x \in C$ satisfying all inequalities for $\varphi_{\Phi^+}^{-1}(A)$ given in the definition in \Cref{thm:1}.
  This point~$x$ thus in particular satisfies all inequalities for $\varphi_{E}^{-1}(A)$ as these are a subset of the inequalities for $\varphi_{\Phi^+}^{-1}(A)$.
  These inequalities thus define a region of the arrangement~$\Shi(E)$.
\end{proof}

\begin{clm}
  \label{clm:phiinvinjective}
  The map~$\varphi_E^{-1} : \Anti_E \rightarrow \RR_E$ is injective.
\end{clm}

\begin{proof}
  Let $A,A' \in \Anti_E$ such that $\varphi_E^{-1}(A) = \varphi_E^{-1}(A')$.
  Denoting this region by~$R$, the definition of $\varphi_{E}^{-1}$ in \Cref{thm:1} yields for $\beta \in E$ that
  \[
    \beta \in \ideal_{E}(A)
    \Longleftrightarrow R \subseteq H_{\beta,1}^-
    \Longleftrightarrow \beta \in \ideal_{E}(A')\,.
  \]
  The first equivalence is given by the inequalities for $\varphi_E^{-1}(A)$ and the second by the inequalities for $\varphi_E^{-1}(A')$.
  This means that $\ideal_{E}(A) = \ideal_{E}(A')$ and we conclude~$A = A'$.
\end{proof}

\begin{proof}[Proof of \Cref{thm:1}]
  From \Cref{clm:1} and \Cref{clm:phiinvwelldefined}, both $\varphi_E$ and $\varphi_{E}^{-1}$ are well-defined.
  From \Cref{clm:phiinjective} and \Cref{clm:phiinvinjective}, both maps are injective.
  \changes{Now we have injective maps in both directions.
  Since all sets are finite, $\varphi_E$ and $\varphi_E^{-1}$ are both bijections.}
  
  It remains to show that they are inverses of one another, \changes{i.e., we need to show that our notation $\varphi_E$ and $\varphi_E^{-1}$ is justified}.
  To see this, let~$R \in \RR_E$ and $A = \varphi_E(R) \in \Anti_E$.
  Consider the unique dominant Shi region~$R' \in \RR_{\Phi^+}$ with $\ceil(R') = \ceil(R) = A$.
  The inequalities for $\varphi_{\Phi^+}^{-1}(A)$ contain the inequalities for $\varphi_{E}^{-1}(A)$, implying
  \[
    R' = \varphi_{\Phi^+}^{-1}(A) \subseteq \varphi_{E}^{-1}(A)\,.
  \]
  On the other hand, \Cref{prop:identifyregion} implies that $R' \subseteq R$.
  \changes{Since $R' \subseteq \varphi_{E}^{-1}(A)$ and $R' \subseteq R$, we have $R' \subseteq \varphi_{E}^{-1}(A) \cap R$.
  Since $\varphi_{E}^{-1}(A)$ and~$R$ are both regions of the same hyperplane arrangement and have nonempty intersection, we obtain $\varphi_{E}^{-1}(A) = R$, as desired.}
\end{proof}

\begin{thm}
\label{thm:2}
  The map
  \begin{align*}
    \psi_E : \LL_E &\longrightarrow  \Anti_E \\
    X &\longmapsto \big\{\beta \in E \mid X \subseteq H_{\beta,1}\big\}
    \intertext{is a bijection with inverse}
    \psi_E^{-1} : \Anti_E &\longrightarrow \LL_E \\
    A &\longmapsto \bigcap_{\beta\in A}H_{\beta,1}\,.
  \end{align*}
\end{thm}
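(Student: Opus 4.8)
The plan is to check that $\psi_E$ and $\psi_E^{-1}$ are well defined and mutually inverse; \Cref{thm:2} follows at once, and I will harvest \Cref{main-thm:set-cardinality-equality} along the way. Two of the four required checks are routine. That $\psi_E$ lands in $\Anti_E$ is the computation already used in \Cref{clm:1}: if $\beta\in\psi_E(X)$ and $v\in X\cap C$, then $\langle v,\beta\rangle=1$ forces $\langle v,\gamma\rangle<1$ for every $\gamma\in E$ with $\gamma\prec\beta$, since $\beta-\gamma$ is a nonzero nonnegative combination of simple roots and $v\in C$; so $X\not\subseteq H_{\gamma,1}$ and $\psi_E(X)$ is an antichain. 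And $\psi_E$ is injective: since $X\cap C\neq\emptyset$ and $C$ avoids every $H_{\beta,0}$, the flat $X$ lies in no $H_{\beta,0}$, hence is the intersection of the $H_{\beta,1}$ containing it, i.e.\ $X=\bigcap_{\beta\in\psi_E(X)}H_{\beta,1}$; this also gives $\psi_E^{-1}\circ\psi_E=\mathrm{id}$ once $\psi_E^{-1}$ is known to take values in $\LL_E$.

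The heart of the matter is that $X_A:=\bigcap_{\beta\in A}H_{\beta,1}$ meets $C$ for every $A\in\Anti_E$, which makes $\psi_E^{-1}$ well defined. I would deduce this from \Cref{thm:1} by a ``surround the point'' argument. For each subset $A'\subseteq A$ the set $A'$ is again an antichain of $E$, so $\varphi_E^{-1}(A')$ is a nonempty region in $C$; moreover, because $A$ is an antichain one has $\ideal{A'}\cap A=A'$, so on $\varphi_E^{-1}(A')$ one gets $\langle v,\beta\rangle<1$ for $\beta\in A'$ and $\langle v,\beta\rangle>1$ for $\beta\in A\setminus A'$. Consequently, under the linear map $p\colon C\to\R^{A}$, $v\mapsto(\langle v,\beta\rangle)_{\beta\in A}$, the image of $\varphi_E^{-1}(A')$ lies in the open orthant of $\R^{A}$ around the all-ones vector $\mathbf 1$ prescribed by $A'$. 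Thus the convex set $p(C)$ meets all $2^{|A|}$ orthants around $\mathbf 1$, and a convex set meeting every orthant around a point must contain that point — otherwise a functional $\xi$ with $\langle\xi,\,\cdot\,-\mathbf 1\rangle\geq 0$ on $p(C)$ would be strictly negative on the orthant with the sign pattern opposite to $\xi$. Hence $\mathbf 1\in p(C)$, which is exactly the existence of $v\in C$ with $\langle v,\beta\rangle=1$ for all $\beta\in A$; so $X_A\in\LL_E$.

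To finish I must show $\psi_E\circ\psi_E^{-1}=\mathrm{id}$, i.e.\ that no hyperplane beyond those of $A$ contains $X_A$: if $\gamma\in E$ and $X_A\subseteq H_{\gamma,1}$ then $\gamma\in A$. If $\gamma$ is comparable to some $\beta\in A$, any $v\in X_A\cap C$ gives a contradiction, since $\langle v,\gamma\rangle$ equals $1$ plus (if $\beta\prec\gamma$) or minus (if $\gamma\prec\beta$) a positive number. If $\gamma$ is incomparable to every element of $A$, I would rerun the previous argument inside the half-space $C_\gamma:=\{v\in C:\langle v,\gamma\rangle>1\}$: since $\gamma\notin\ideal{A'}$ for every $A'\subseteq A$, each $\varphi_E^{-1}(A')$ in fact lies in $C_\gamma$, so $p(C_\gamma)$ still meets all orthants around $\mathbf 1$ and hence contains $\mathbf 1$, producing $v\in X_A\cap C$ with $\langle v,\gamma\rangle>1$; so $X_A\not\subseteq H_{\gamma,1}$, a contradiction. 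This proves \Cref{thm:2}. For \Cref{main-thm:set-cardinality-equality}, note that $Y\mapsto\psi_E(Y)$ carries the lower interval $[V,X]$ of $\LL_E$ isomorphically onto the poset of subsets of $\psi_E(X)$; since $\LL_E$ is graded by codimension, this Boolean lattice must have rank $\codim X$, so $\codim X=|\psi_E(X)|$ (in particular antichains of the root poset turn out to be linearly independent) and $\mu(V,X)=(-1)^{\codim X}$, whence $\#\LL_E=\#\RR_E$. The step I expect to require the most thought is finding the ``surround the point'' reduction — that \emph{every} sub-antichain of $A$ is available from \Cref{thm:1} and that these regions collectively enclose $\mathbf 1$; the remaining work is bookkeeping with order ideals, the antichain hypothesis on $A$ entering only when one checks that $\varphi_E^{-1}(A')$ lands in the correct orthant (and in $C_\gamma$).
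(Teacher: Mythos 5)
Your proposal is correct, but it reaches \Cref{thm:2} by a genuinely different route than the paper. The paper never verifies directly that $\bigcap_{\beta\in A}H_{\beta,1}$ meets the dominant cone: it proves only well-definedness and injectivity of $\psi_E$ (the latter via atomicity, \Cref{prop:geometric}), and then concludes bijectivity by counting --- lower intervals $[V,X]$ are Boolean because antichains are linearly independent by Sommers' result (\Cref{prop:sommers}), so $\mu(V,X)=(-1)^{\codim X}$, Zaslavsky's dissection theorem gives $\#\LL_E=\#\RR_E$, and $\#\RR_E=\#\Anti_E$ by \Cref{thm:1}; the formula for $\psi_E^{-1}$ is then read off afterwards. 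You instead prove surjectivity constructively: using that all regions $\varphi_E^{-1}(A')$, $A'\subseteq A$, are nonempty (from \Cref{thm:1}) and that, under the linear map $p(v)=(\langle v,\beta\rangle)_{\beta\in A}$, they land in all $2^{\#A}$ open orthants around the all-ones point, a separating-functional argument applied to the convex sets $p(C)$ and $p\big(\{v\in C \mid \langle v,\gamma\rangle>1\}\big)$ produces an explicit point of $C$ on $\bigcap_{\beta\in A}H_{\beta,1}$ and shows that no further hyperplane $H_{\gamma,1}$ with $\gamma\in E$ contains this flat; combined with your (correct, and simpler than the paper's) injectivity argument, this gives both compositions equal to the identity without invoking Sommers' proposition or Zaslavsky's theorem. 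What each approach buys: the paper's counting route is shorter given its cited inputs and delivers the Boolean-interval and Whitney-number statements in the same breath; your route is more self-contained and explicit at this step, and even recovers the linear independence of antichains as a corollary (via $\codim X=\#\psi_E(X)$) rather than using it as an input. The only caveat concerns the harvested \Cref{main-thm:set-cardinality-equality}: the final step from $\mu(V,X)=(-1)^{\codim X}$ to $\#\LL_E=\#\RR_E$ is not automatic but requires Zaslavsky's theorem for arrangements restricted to an open convex cone, so you should cite \cite{zaslavsky77} there exactly as the paper does; this does not affect your proof of \Cref{thm:2} itself.
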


Before proving this theorem, we provide the following well-known property for later reference.

\begin{prop}
\label{prop:geometric}
  Every lower interval in $\LL_E$ is a geometric lattice.
  In particular, $\LL_E$ is atomic in the sense that every element $X \in \LL_E$ is the join of the atoms in $\LL_E$ below~$X$.
\end{prop}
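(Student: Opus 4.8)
The plan is to deduce the proposition from standard facts about the intersection poset of a hyperplane arrangement, after observing that $\LL_E$ sits inside the full intersection poset $\LL\big(\Shi(E)\big)$ as a downward-closed order ideal. Recall that $\LL\big(\Shi(E)\big)$ is ordered by reverse inclusion, so that its minimum is the ambient space~$V$ and a lower interval $[V,X]$ consists of all flats of $\Shi(E)$ containing~$X$.

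First I would check that $\LL_E$ is an order ideal of $\LL\big(\Shi(E)\big)$. If $X \in \LL_E$, so that $X \cap C \neq \emptyset$, and if $Y \leq X$ in reverse inclusion, i.e.\ $X \subseteq Y$, then $Y \cap C \supseteq X \cap C \neq \emptyset$, whence $Y \in \LL_E$. Thus $\LL_E$ is downward closed, and for every $X \in \LL_E$ the lower interval $[V,X]$ computed in $\LL_E$ coincides with the lower interval $[V,X]$ computed in the full poset $\LL\big(\Shi(E)\big)$. This reduces the claim to the corresponding statement for lower intervals of the intersection poset of the arrangement $\Shi(E)$.

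Next I would invoke the standard structural fact from the theory of hyperplane arrangements (see, e.g., the monograph of Orlik--Terao or Stanley's lectures on hyperplane arrangements): for any arrangement $\mathcal B$ and any flat $X \in \LL(\mathcal B)$, the lower interval $[V,X]$ is isomorphic to the intersection lattice of the localized arrangement $\mathcal B_X = \{ H \in \mathcal B \mid X \subseteq H\}$, whose hyperplanes all contain~$X$ and which is therefore a geometric lattice. Applying this to $\mathcal B = \Shi(E)$ shows that each such $[V,X]$ is a geometric lattice, and by the first step this is exactly a lower interval of $\LL_E$.

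Finally, atomicity follows from the geometric-lattice property together with the order-ideal property. In a geometric lattice every element is the join of the atoms below it; here the join in reverse inclusion is intersection, and the atoms of $[V,X]$ are precisely the hyperplanes $H_{\beta,i} \in \Shi(E)$ containing~$X$. Each such hyperplane contains~$X$ and hence meets~$C$, so it lies in $\LL_E$ and is an atom of $\LL_E$ below~$X$; conversely every atom of $\LL_E$ below~$X$ is such a hyperplane. Therefore $X = \bigcap\{H \mid X \subseteq H\}$ is the join of the atoms of $\LL_E$ lying below it, as claimed. I do not expect a genuine obstacle here: the content is entirely the reduction to the full intersection poset, and the only point requiring care is that $\LL_E$ is merely a meet-semilattice rather than a lattice, so the statement must be phrased in terms of lower intervals, which the order-ideal property lets us compute indifferently in $\LL_E$ or in $\LL\big(\Shi(E)\big)$.
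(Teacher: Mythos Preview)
The paper states this proposition explicitly \emph{without proof}, calling it a ``well-known property'' to be used later; there is thus no proof in the paper to compare against. Your argument is correct and supplies exactly the standard justification the authors have in mind: $\LL_E$ is an order ideal of $\LL\big(\Shi(E)\big)$ (since $X \subseteq Y$ and $X \cap C \neq \emptyset$ force $Y \cap C \neq \emptyset$), lower intervals in an intersection poset are intersection lattices of localizations and hence geometric, and atomicity then follows because the atoms below $X$ in $\LL_E$ coincide with those in $\LL\big(\Shi(E)\big)$.
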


\begin{proof}
  Every interval in the intersection poset of a hyperplane arrangement is geometric, see e.g.~\cite[Proposition~3.8]{stanley-hyperplanes} or~\cite{wachs-walker}.
  As every lower interval in~$\LL_E$ is also a lower interval in the intersection poset of the hyperplane arrangement~$\Shi(E)$, the statement follows.
\end{proof}

We also recall the following theorem of Sommers.

\begin{prop}[\!\!\cite{sommers}]
\label{prop:sommers}
  Let $A \subseteq \Phi^+$ be an antichain.
  Then there exists an element $w \in W$ in the Weyl group such that $w(A) \subseteq \Delta$.
  In particular, $A$ is linearly independent.
\end{prop}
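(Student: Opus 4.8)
The plan is to treat the two claims separately. The parenthetical ``$A$ is linearly independent'' is soft: first note that two distinct elements $\beta,\gamma$ of an antichain satisfy $\langle\beta,\gamma\rangle\le 0$, since $\langle\beta,\gamma\rangle>0$ would force $\beta-\gamma$ to be a nonzero root, and a root is $\pm$ a nonnegative integer combination of simple roots, so either $\gamma\prec\beta$ or $\beta\prec\gamma$ in the root poset, contradicting the antichain property. A pairwise-obtuse set of positive roots is then linearly independent: given $\sum_\beta c_\beta\beta=0$, set $u=\sum_{c_\beta>0}c_\beta\beta=\sum_{c_\gamma<0}(-c_\gamma)\gamma$; then $\langle u,u\rangle$ is a nonnegative combination of inner products $\langle\beta,\gamma\rangle$ with $\beta\neq\gamma$, hence $\le 0$, so $u=0$, and a nontrivial nonnegative combination of positive roots cannot vanish because all positive roots lie in a common open half-space. (This also follows a posteriori from $w(A)\subseteq\Delta$, as $w$ is a linear isometry and $\Delta$ is independent.)

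For the existence of $w\in W$ with $w(A)\subseteq\Delta$ I would induct on the total height $\sum_{\beta\in A}\operatorname{ht}(\beta)$. If $A\subseteq\Delta$ take $w=e$; otherwise it is enough to find a simple root $\alpha$ such that: (i) $\alpha\notin A$, so $s_\alpha$ permutes $\Phi^+\setminus\{\alpha\}$ and hence $s_\alpha(A)\subseteq\Phi^+$; (ii) $\langle\sigma,\alpha^\vee\rangle>0$ for $\sigma:=\sum_{\beta\in A}\beta$, so that $\sum_{\beta\in A}\operatorname{ht}(s_\alpha\beta)=\sum_{\beta\in A}\operatorname{ht}(\beta)-\langle\sigma,\alpha^\vee\rangle$ drops strictly; and (iii) $s_\alpha(A)$ is again an antichain. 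Given such an $\alpha$, the inductive hypothesis supplies $w'$ with $w'\big(s_\alpha(A)\big)\subseteq\Delta$, and $w=w's_\alpha$ finishes the step.

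To get (i) and (ii): the bilinear form is positive definite on the span of $\operatorname{supp}(\sigma)$ and $\sigma$ is a strictly positive combination of the simple roots there, so $\langle\sigma,\alpha^\vee\rangle>0$ for some $\alpha\in\operatorname{supp}(\sigma)$; moreover a simple root contained in the support of some $\beta\in A$ cannot itself be a \emph{different} element of $A$ (it would be $\prec\beta$), so the only way such an $\alpha$ could lie in $A$ is as one of the simple elements of $A$, a residual case that must be handled separately. The main obstacle is condition (iii): ensuring, simultaneously with (i) and (ii), that $s_\alpha$ creates no comparability inside $A$. The relevant analysis is that an incomparable pair $\{\beta,\gamma\}\subseteq A$ becomes comparable under $s_\alpha$ only when all coefficients of one sign in $\beta-\gamma$ are concentrated on the coordinate $\alpha$ and $|\langle\beta-\gamma,\alpha^\vee\rangle|$ is large enough to push that coordinate past $0$; one must argue that a choice of $\alpha$ (for instance among simple roots $\alpha\notin A$ with $\langle\beta,\alpha^\vee\rangle\ge 0$ for all $\beta\in A$) avoiding every such pair always exists. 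That the antichain hypothesis, rather than mere obtuseness, is essential here is already visible in type $B_2$: the orthogonal positive roots $e_1-e_2$ and $e_1+e_2$ form no antichain and are contained in no simple system. An alternative --- presumably closer to Sommers' own argument --- is to use that the order filter generated by $A$ is an ad-nilpotent ideal, hence (via the Cellini--Papi/Athanasiadis correspondence, or directly through \Cref{thm:1} with $E=\Phi^+$) corresponds to a distinguished alcove inside the dominant Shi region attached to $A$, from whose bounding hyperplanes one reads off the desired $w\in W$.
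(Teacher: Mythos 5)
Your first paragraph is fine: an antichain is pairwise obtuse (if $\langle\beta,\gamma\rangle>0$ then $\beta-\gamma$ is a root, hence a positive or negative root, forcing comparability), and a pairwise-obtuse set of vectors in an open half-space is linearly independent. But that is the easy, parenthetical half of \Cref{prop:sommers} (the paper itself offers no proof at all, quoting the statement from Sommers), and the substantive claim --- existence of $w\in W$ with $w(A)\subseteq\Delta$ --- is exactly what your sketch does not deliver. The induction stands or falls with your condition (iii), which you leave open, and it genuinely fails: in type $A_4$ take the antichain $A=\{\alpha_1+\alpha_2,\ \alpha_2+\alpha_3,\ \alpha_3+\alpha_4\}$, so $\sigma=\alpha_1+2\alpha_2+2\alpha_3+\alpha_4$. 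The only simple roots with $\langle\sigma,\alpha^\vee\rangle>0$ are $\alpha_2$ and $\alpha_3$, and both destroy the antichain property: $s_{\alpha_2}(A)=\{\alpha_1,\ \alpha_3,\ \alpha_2+\alpha_3+\alpha_4\}$ contains the comparable pair $\alpha_3\prec\alpha_2+\alpha_3+\alpha_4$, and symmetrically $s_{\alpha_3}(A)=\{\alpha_1+\alpha_2+\alpha_3,\ \alpha_2,\ \alpha_4\}$ contains $\alpha_2\prec\alpha_1+\alpha_2+\alpha_3$. Your proposed refinement --- choose $\alpha\notin A$ with $\langle\beta,\alpha^\vee\rangle\ge 0$ for all $\beta\in A$ --- is vacuous here, since every simple root pairs negatively with some element of $A$. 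So for this antichain \emph{no} admissible height-decreasing simple reflection preserves antichains, even though the conclusion of the proposition holds for it (the permutation $e_1\mapsto e_1$, $e_2\mapsto e_4$, $e_3\mapsto e_2$, $e_4\mapsto e_5$, $e_5\mapsto e_3$ sends $A=\{e_1-e_3,\ e_2-e_4,\ e_3-e_5\}$ to $\{\alpha_1,\alpha_4,\alpha_2\}$). The defect is therefore structural, not a matter of choosing $\alpha$ more carefully: an induction that insists on staying inside the class of antichains while applying one simple reflection at a time cannot work; one must induct on a different statement or class of sets.

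The alternative you gesture at (ad-nilpotent ideals, the Cellini--Papi correspondence with affine Weyl group elements) is indeed the territory of Sommers' actual proof, but as written it is a pointer rather than an argument: the step ``from whose bounding hyperplanes one reads off the desired $w\in W$'' is precisely the nontrivial content, and \Cref{thm:1} with $E=\Phi^+$ only produces the dominant Shi region attached to $A$, not an element of $W$ conjugating $A$ into $\Delta$. As it stands, the main assertion of \Cref{prop:sommers} remains unproved in your proposal.
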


\begin{clm}
\label{clm:2}
  The map~$\psi_E: \LL_E \rightarrow \Anti_E$ is well-defined.
\end{clm}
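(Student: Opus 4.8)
The plan is to reuse the positivity argument from the proof of \Cref{clm:1}, now with an equality in place of the inequality. Fix $X \in \LL_E$. Since $\Shi(E)$ contains, apart from the linear hyperplanes through the origin (which are never of the form $H_{\beta,1}$), only the affine hyperplanes $H_{\beta,1}$ with $\beta \in E$, the set $\psi_E(X) = \{\beta \mid X \subseteq H_{\beta,1}\}$ is automatically contained in~$E$. So the only thing left to verify is that $\psi_E(X)$ is an antichain of the root poset.

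I would argue this by contradiction. Suppose $\beta, \gamma \in \psi_E(X)$ are distinct with $\beta \prec \gamma$. Then $\gamma - \beta = \sum_{\alpha \in \Delta} c_\alpha \alpha$ for nonnegative integers $c_\alpha$, not all zero. Because $X \in \LL_E$, by definition $X \cap C \neq \emptyset$, so we may pick some $v \in X \cap C$. Since $X \subseteq H_{\beta,1} \cap H_{\gamma,1}$ we get $\langle v, \beta \rangle = \langle v, \gamma \rangle = 1$, hence
\[
  0 = \langle v, \gamma \rangle - \langle v, \beta \rangle = \langle v, \gamma - \beta \rangle = \sum_{\alpha \in \Delta} c_\alpha \langle v, \alpha \rangle.
\]
But $v \in C$ forces $\langle v, \alpha \rangle > 0$ for every simple root $\alpha$, so the right-hand side is strictly positive, a contradiction. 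Interchanging $\beta$ and $\gamma$ rules out $\gamma \prec \beta$ as well, so no two elements of $\psi_E(X)$ are comparable; that is, $\psi_E(X) \in \Anti_E$, and $\psi_E$ is well-defined.

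I do not expect a genuine obstacle here: the computation is essentially the one already performed for ceilings in \Cref{clm:1}, with the closure $\overline{R} \cap C$ replaced by a single point of the open cone $X \cap C$, whose existence is exactly the membership condition defining~$\LL_E$, and with the strict inequality there replaced by the equality coming from $X \subseteq H_{\beta,1} \cap H_{\gamma,1}$. The only point worth emphasizing is that it is the openness of the dominant cone~$C$ (rather than its closure) that makes the final inequality strict and thus yields the contradiction.
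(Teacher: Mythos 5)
Your argument is correct and is essentially the paper's own proof: the same computation $\langle v,\gamma-\beta\rangle = \sum_{\alpha\in\Delta} c_\alpha\langle v,\alpha\rangle > 0$ for $v$ in the open cone $C$, contradicting $\langle v,\beta\rangle = \langle v,\gamma\rangle = 1$; the paper merely phrases it contrapositively (a non-antichain intersection cannot meet $C$) rather than by contradiction at a chosen point of $X\cap C$. The only cosmetic caveat is that the containment $\psi_E(X)\subseteq E$ is best justified by reading the definition of $\psi_E$ with $\beta$ ranging over $E$ (as the paper implicitly does), since a flat of $\Shi(E)$ could a priori lie in an affine hyperplane not belonging to the arrangement.
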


\begin{proof}
  Let $X \in \LL_E$.
  This means there exists $B \subseteq E$ such that
  \[
    X = \bigcap_{\beta\in B} H_{\beta,1} \cap C \neq \emptyset\,.
  \]
  Without loss of generality, we may assume that $B = \{ \beta \in E \mid X \subseteq H_{\beta,1}\}$.
  We need to show that $B \subset E$ is an antichain in the root poset.
  For the sake of contradiction, assume that~$B$ is not an antichain, meaning that there exist $\gamma \prec \beta$ in~$B$.

  Let $v \in X$.
  Since $v \in H_{\gamma,1} \cap H_{\beta,1}$, we have $\langle v,\gamma \rangle = \langle v,\beta \rangle = 1$ and thus
  \[
    \langle v, \beta-\gamma \rangle = 0\,.
  \]
  Because $\beta - \gamma = \sum_{\alpha\in\Delta} c_\alpha \alpha$ with $c_\alpha \geq 0$ and not all $c_\alpha = 0$, we obtain that there exists~$\alpha \in \Delta$ with $\langle v, \alpha\rangle \leq 0$.
  This is a contradiction to the fact that $v \in X \subseteq C$ and thus, $\langle v,\alpha\rangle > 0$ for all $\alpha \in \Delta$.
\end{proof}

\begin{clm}
  The map~$\psi_E: \LL_E \rightarrow \Anti_E$ is injective.
\end{clm}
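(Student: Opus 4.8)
The plan is to show that the value $\psi_E(X) = \{\beta \mid X \subseteq H_{\beta,1}\}$ determines $X$ completely, by recovering $X$ as the intersection $\bigcap_{\beta \in \psi_E(X)} H_{\beta,1}$; this simultaneously identifies the candidate inverse. Write $A = \psi_E(X)$, which is an antichain in $E$ by \Cref{clm:2}, and set $Y = \bigcap_{\beta \in A} H_{\beta,1}$. By the very definition of $A$ we have $X \subseteq Y$, so it remains to prove the reverse inclusion $Y \subseteq X$, equivalently $\codim Y \le \codim X$, equivalently (since $X \subseteq Y$ already forces $\dim Y \ge \dim X$) that $\dim Y = \dim X$.

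The key observation is that $X$, being an element of $\LL_E$ that meets the open cone $C$, is cut out by exactly those Shi hyperplanes containing it, and by \Cref{prop:geometric} the flat $X$ is the join in $\LL_E$ of the atoms of $\LL_E$ lying below it. These atoms are hyperplanes of $\Shi(E)$; each is either a reflection hyperplane $H_{\gamma,0}$ with $\gamma \in \Phi^+$ or an affine hyperplane $H_{\beta,1}$ with $\beta \in E$. But $X \cap C \ne \emptyset$ and $C$ lies strictly on the positive side of every $H_{\gamma,0}$, so no reflection hyperplane can contain $X$. Hence every atom below $X$ is of the form $H_{\beta,1}$ with $\beta \in A$, and therefore $X = \bigcap_{\beta \in A} H_{\beta,1} = Y$. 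This gives injectivity, and in fact exhibits $\psi_E^{-1}$ as claimed.

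The step I expect to be the genuine content — as opposed to bookkeeping — is the use of \Cref{prop:geometric} (atomicity of lower intervals in $\LL_E$) together with the geometric fact that $X$ meets the open dominant cone to conclude that \emph{no reflection hyperplane passes through $X$}. Once that is in hand, atomicity immediately collapses $X$ onto the intersection of the affine hyperplanes indexed by $A$. One subtlety to handle carefully: I must argue that the atoms of $\LL_E$ below $X$ are precisely among the hyperplanes of the arrangement containing $X$ (true because in a hyperplane arrangement the atoms of the intersection poset are exactly the hyperplanes), and that $A$, as defined via "$X \subseteq H_{\beta,1}$", captures all of them — which is immediate. The linear independence furnished by \Cref{prop:sommers} is not strictly needed for injectivity here, though it guarantees that $\codim Y = \#A$, matching the Whitney-number count in \Cref{main-thm:set-cardinality-equality}; I would mention this only in passing.
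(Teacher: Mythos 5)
Your proof is correct and follows essentially the same route as the paper: identify $\psi_E(X)$ with the set of atoms of $\LL_E$ below $X$ and invoke the atomicity in \Cref{prop:geometric} to recover $X$ as their join $\bigcap_{\beta\in\psi_E(X)}H_{\beta,1}$. You merely make explicit the step the paper leaves implicit, namely that since $X\cap C\neq\emptyset$ no reflection hyperplane $H_{\gamma,0}$ can contain $X$ (indeed such hyperplanes do not even lie in $\LL_E$), so all atoms below $X$ are of the form $H_{\beta,1}$; you are also right that \Cref{prop:sommers} is not needed for injectivity.
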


\begin{proof}
  From \Cref{prop:geometric}, $X$ is the join of all the atoms below~$X$ in $\LL_E$.
  Geometrically: the atoms below $X$ are the hyperplanes containing~$X$.
  From \Cref{clm:2}, the defining-roots of these hyperplanes form an antichain of the root poset.
  This uniquely describes~$X$, so~$\psi_E$ is injective.
\end{proof}

\begin{clm}
\label{clm:3}
  We have that the interval $[V,X]$ is Boolean for all $X\in \LL_E$.
  In particular, $\mu(V,X) = (-1)^{\codim(X)}$ and
  \[
    \#\LL_E = \#\Anti_E\,.
  \]
\end{clm}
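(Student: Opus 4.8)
The plan is to prove the Boolean-interval statement first; the M\"obius-function values then drop out immediately, and the cardinality identity follows by comparing with \Cref{thm:1}.

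Fix $X\in\LL_E$ and set $A=\psi_E(X)$, which is an antichain in $E$ by \Cref{clm:2}. The first point is that $X=\bigcap_{\beta\in A}H_{\beta,1}$. Indeed, write $X$ as an intersection of hyperplanes of $\Shi(E)$; no reflecting hyperplane $H_{\gamma,0}$ can occur among them, since that would force $\langle\,\cdot\,,\gamma\rangle\equiv 0$ on $X$, contradicting $X\cap C\neq\emptyset$ (where $\langle\,\cdot\,,\gamma\rangle>0$). So $X$ is an intersection of hyperplanes $H_{\gamma,1}$ with $\gamma\in E$, each such $\gamma$ lies in $A$, hence $\bigcap_{\beta\in A}H_{\beta,1}\subseteq X$; the reverse inclusion holds by the definition of $A$. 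By \Cref{prop:sommers} the set $A$ is linearly independent, so $\codim X=\#A$, and the same discussion applies verbatim to every element of $\LL_E$.

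Now I turn to the interval $[V,X]$ in $\LL_E$, ordered by reverse inclusion with $\hat 0=V$; it equals $\{Y\in\LL_E : X\subseteq Y\}$. I claim the map $B\mapsto Y_B:=\bigcap_{\beta\in B}H_{\beta,1}$ is an isomorphism of posets $(2^A,\subseteq)\to[V,X]$. It lands in $[V,X]$: $Y_B\in\LL(\Shi(E))$ and $Y_B\cap C\supseteq X\cap C\neq\emptyset$, so $Y_B\in\LL_E$, while $X\subseteq Y_B$. It is injective, because linear independence of $A$ yields $\psi_E(Y_B)=B$ exactly: if $\gamma\in E$ and $Y_B\subseteq H_{\gamma,1}$, then $X\subseteq Y_B\subseteq H_{\gamma,1}$ forces $\gamma\in A$, while $Y_B\subseteq H_{\gamma,1}$ forces $\gamma\in\operatorname{span}B$, hence $\gamma\in B$ since $B\cup\{\gamma\}\subseteq A$ is independent. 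It is onto $[V,X]$: any $Y\in[V,X]$ equals $\bigcap_{\beta\in\psi_E(Y)}H_{\beta,1}$ by the previous paragraph applied to $Y$, and $\psi_E(Y)\subseteq\psi_E(X)=A$ since $X\subseteq Y$. Finally $B\subseteq B'\iff Y_{B'}\subseteq Y_B\iff Y_B\le Y_{B'}$, so both the map and its inverse are order-preserving. Hence $[V,X]$ is isomorphic to the Boolean lattice $B_{\#A}$, and $\mu(V,X)=(-1)^{\#A}=(-1)^{\codim X}$.

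For the equality $\#\LL_E=\#\Anti_E$, since $\psi_E$ is injective it suffices to show it is onto $\Anti_E$, i.e.\ that $\bigcap_{\beta\in A}H_{\beta,1}\cap C\neq\emptyset$ for every antichain $A\in\Anti_E$ (by the independence argument above, $\psi_E$ then sends this flat to $A$). This is where \Cref{prop:sommers} is needed in full: choosing $w\in W$ with $w(A)\subseteq\Delta$ reduces the question to meeting a flat cut out by genuine simple roots inside a Weyl chamber. Alternatively one can sidestep surjectivity entirely: the elements of $\RR_E$ are exactly the chambers of $\Shi(E)$ contained in the open cone $C$, and $C$ is itself a chamber of the subarrangement $\A(W)=\Shi_0(\Phi^+)\subseteq\Shi(E)$, so the relative form of Zaslavsky's theorem gives $\#\RR_E=\sum_{X\in\LL_E}|\mu(V,X)|$; by the Boolean-interval claim every summand is $1$, so $\#\RR_E=\#\LL_E$, and \Cref{thm:1} gives $\#\RR_E=\#\Anti_E$. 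I expect this last step to be the main obstacle; everything preceding it is bookkeeping with the inclusion order, the only external input being the linear independence of antichains supplied by \Cref{prop:sommers}.
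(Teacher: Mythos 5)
Your proposal is correct and follows essentially the same route as the paper: identify $X=\bigcap_{\beta\in A}H_{\beta,1}$ with $A=\psi_E(X)$ an antichain, invoke Sommers' linear independence to get Boolean lower intervals and hence $\mu(V,X)=(-1)^{\codim X}$, then combine the cone form of Zaslavsky's theorem with \Cref{thm:1} to get $\#\LL_E=\#\RR_E=\#\Anti_E$. The only difference is that you spell out the Boolean isomorphism $2^{A}\cong[V,X]$ explicitly (and correctly), where the paper deduces it more tersely from linear independence and atomicity.
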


\begin{proof}
  Certainly $X = \bigcap_{\beta\in A} H_{\beta,1}$ for $A \subseteq E$ and we have seen in the proof of \Cref{clm:2} that~$A$ is an antichain in~$E$ and also in~$\Phi^+$.
  It thus follows from \Cref{prop:sommers} that~$A$ is linearly independent and $[V,X]$ is a Boolean lattice.
  This implies $\mu(V,X) = (-1)^{\codim(X)}$ and Zaslavsky's theorem~\cite[Example A]{zaslavsky77} gives
  \[
    \# \RR_E = \sum_{X\in \LL_E} |\mu(V,X)| = \# \LL_E. \qedhere
  \]
\end{proof}

\begin{note}
  Since each lower interval $[V,X]$ is Boolean, the previous proof also shows that
  \[
    \mu(X,Y) = (-1)^{\codim(X) + \codim(Y)}
  \]
  for all $X, Y \in \LL_E$ with $Y\subseteq X$.
  In particular, this means that $\LL_E$ is Eulerian.
\end{note}

\Cref{clm:3} has the following immediate consequence for the \Dfn{Poincar\'e polynomial}
\[
  \Poin_E(C,t) = \sum_{X \in \LL_E} t^{\codim(X)} = \sum_{k \geq 0} \whit{E,k}{C}\cdot t^k
\]
of the dominant cone~$C$ in the Shi deletion~$\Shi(E)$ and where $\whit{E,k}C$ denotes its \Dfn{$k$\th Whitney number}.

\begin{coro}
\label{prop:whitney-numbers}
  Let $E \subseteq \Phi^+$.
  Then the $k$\th Whitney number of $\LL_E$ is given by
  \[
    \whit{E,k}{C} = \# \{ A\in\Anti_E \mid \# A = k \}\,.
  \]
\end{coro}

\begin{proof}[Proof of \Cref{thm:2}]
  The preceeding sequence of claims shows that $\psi_E : \LL_E \to \Anti_E$ is well-defined and injective.
  In \Cref{thm:1}, we saw that $\# \Anti_E = \# \RR_E$.
  With \Cref{clm:3}, we obtain $\# \LL_E = \# \Anti_E$ and thus, $\psi_E$ is a bijection.

  It remains to show that $\psi_E^{-1}$ is well-defined and the inverse of~$\psi_E$.
  We prove both simultaneously.
  Let~$A \in \Anti_E$.
  Since $\psi_E$ is a bijection, there is a unique $X \in \LL_E$ with $\psi_E(X) = A$.
  This means that $X = \bigcap_{\beta \in A} H_{\beta,1}$.
  This implies that
  \[
    X = \psi_E^{-1}(A) = \psi_E^{-1}\circ \psi_E(X)\,.
  \]
  Since this holds for any $A \in \Anti_E$, we have that~$\psi_E^{-1}$ is well-defined and the inverse of~$\psi_E$.
\end{proof}

\subsection{Deduction of main results}
\label{sec:deduction}

In this section we deduce the main results of this paper from the results in the previous section.
When $E = \Phi^+ \setminus \Inv(w^{-1})$, we show that $\Anti_E = \Anti_w$, $\whit{E,k}{C} = \whit{k}{wC}$, and\footnote{Explicitly including the additional equalities for the Poincaré polynomials was suggested to us by one of the referees. We happily include them and take this opportunity to again thank the referees for their extremely detailed analysis of our arguments and possible implications!}
\[
  \Poin_E(C,t) = \Poin(wC,t)\,.
\]
The main ingredient is the following lemma of Armstrong-Reiner-Rhoades (we include the proof for completeness, as the proof is short and we use a slightly reframed version of their statement).
Recall that the inversion set of an element $w\in W$ is
\[
\Inv(w) = \Phi^+ \cap w\Phi^- = \{ \beta \in \Phi^+ \mid w^{-1}(\beta) \in \Phi^-\}\,.
\]

\begin{lem}[\!\!{\cite[Lemma 10.2]{Armstrong-Reiner-Rhoades}}]
\label{lem:inversions}
  Let $w\in W$ and $\beta \in\Phi^+$.
  Then $H_{\beta,1}$ cuts through $wC$ if and only if $\beta$ is not an inversion of $w$.
  In symbols,
  \[
    H_{\beta,1} \cap wC = \emptyset \iff \beta \in \Inv(w)\,.
  \]
\end{lem}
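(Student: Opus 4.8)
The plan is to read off everything from the explicit half-space description of the cone recorded in the introduction,
\[
  wC = \bigcap_{\beta\,\in\,\Inv(w)} H_{\beta,0}^- \,\cap\, \bigcap_{\beta\,\in\,\Phi^+\setminus\Inv(w)} H_{\beta,0}^+\,,
\]
together with the single extra fact that $W$ acts on $V$ by orthogonal transformations, so that $\langle wu,\gamma\rangle=\langle u,w^{-1}(\gamma)\rangle$ for all $u\in V$ and $\gamma\in V$.

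First I would dispatch the easy implication. If $\beta\in\Inv(w)$, then by the description above every $v\in wC$ satisfies $\langle v,\beta\rangle<0$, hence $\langle v,\beta\rangle\neq 1$, so $wC\cap H_{\beta,1}=\emptyset$. (Since $wC$ is open and full-dimensional, this also settles the statement for the reading of ``cuts through'' as ``has points strictly on both sides'': a hyperplane disjoint from $wC$ certainly does not cut through it.)

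For the converse I would produce an explicit point of the intersection. Suppose $\beta\notin\Inv(w)$ and set $\gamma:=w^{-1}(\beta)$; by the definition $\Inv(w)=\{\beta\in\Phi^+\mid w^{-1}(\beta)\in\Phi^-\}$ we then have $\gamma\in\Phi^+$. Choose any $u_0\in C$, so $\langle u_0,\gamma\rangle>0$, and rescale: because $C$ is a cone, $u:=u_0/\langle u_0,\gamma\rangle$ again lies in $C$, and $\langle u,\gamma\rangle=1$. Then $wu\in wC$, and orthogonality of the $W$-action gives
\[
  \langle wu,\beta\rangle=\langle u,w^{-1}(\beta)\rangle=\langle u,\gamma\rangle=1\,,
\]
so $wu\in wC\cap H_{\beta,1}$; in particular this intersection is nonempty (and, $wC$ being open and full-dimensional, $H_{\beta,1}$ genuinely cuts through it).

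There is no serious obstacle here — the lemma is essentially bookkeeping. The only points requiring mild care are (i) keeping the $w$ versus $w^{-1}$ convention straight, i.e. that $\beta\notin\Inv(w)$ is exactly the condition making $w^{-1}(\beta)$ a \emph{positive} root, and (ii) observing that for a hyperplane and the open full-dimensional cone $wC$ the notions ``cuts through'' and ``has nonempty intersection'' agree, so that the displayed equivalence is the whole content. After translating by $w$, the substance of the proof collapses to the triviality that the linear functional $\langle\,\cdot\,,\gamma\rangle$, being positive on the open cone $C$, attains the value $1$ there, which holds by scaling.
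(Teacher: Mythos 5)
Your proof is correct and follows essentially the same route as the paper: both reduce to the dominant cone via $W$-invariance of the inner product and observe that the functional $\langle\,\cdot\,,w^{-1}(\beta)\rangle$ is positive on $C$ exactly when $w^{-1}(\beta)\in\Phi^+$, in which case it attains the value $1$ by scaling. The paper merely packages this as a single chain of equivalences rather than two separate implications with an explicit witness.
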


\begin{proof}
  We have
  \begin{align*}
    H_{\beta,1} \cap wC \neq \emptyset &\iff \text{there exists } v \in C \text{ such that } \langle \beta,w(v) \rangle = 1\\
    &\iff \text{there exists } v \in C \text{ such that } \langle w^{-1}(\beta),v \rangle = 1\\
    &\iff w^{-1}(\beta)\in\Phi^+ \\
    &\iff \beta \notin \Inv(w).
  \end{align*}
  Here, the first and last equivalences are the respective definitions, the second uses the $W$-invariance of the inner product, and the third uses the fact that the dominant cone can be described by $C = \big\{ v \in V \mid \langle\gamma,v\rangle > 0 \text{ for all } \gamma \in \Phi^+\big\}$.
\end{proof}

The following lemma is well-known and appears in several places including~\cite[Section 4.1]{dyer}.
We include its proof for completeness.

\begin{lem}
\label{lem:inversionset}
  For $w \in W$, we have that $\Phi^+\setminus\Inv(w) = w\big(\Phi^+\setminus\Inv(w^{-1})\big)$.
\end{lem}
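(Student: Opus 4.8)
The plan is to show that both sides of the claimed equality coincide with the single set $\Phi^+ \cap w(\Phi^+)$, after which there is nothing left to do. First I would rewrite the two inversion complements in a form read off directly from the definition $\Inv(w) = \Phi^+ \cap w\Phi^-$. Since $\Phi = \Phi^+ \sqcup \Phi^-$ and $w$ permutes $\Phi$, complementing inside $\Phi^+$ gives
\[
  \Phi^+ \setminus \Inv(w) = \Phi^+ \cap w(\Phi^+),
\]
and applying the same reasoning to $w^{-1}$ in place of $w$ gives $\Phi^+ \setminus \Inv(w^{-1}) = \Phi^+ \cap w^{-1}(\Phi^+)$.

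Next I would simply apply $w$ to the second identity. Because $w$ acts as a bijection on $\Phi$, it commutes with intersection, so
\[
  w\big(\Phi^+ \setminus \Inv(w^{-1})\big) = w(\Phi^+) \cap w\big(w^{-1}(\Phi^+)\big) = w(\Phi^+) \cap \Phi^+.
\]
Comparing this with the first identity yields $\Phi^+ \setminus \Inv(w) = w\big(\Phi^+ \setminus \Inv(w^{-1})\big)$, which is the assertion of the lemma.

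I do not anticipate any real obstacle; this is pure bookkeeping with the definitions. The only point that needs a little care is the handling of inverses, namely the fact that $\Inv(w^{-1}) = \{\gamma \in \Phi^+ \mid w(\gamma) \in \Phi^-\}$, which is exactly what makes the substitution $\beta = w(\gamma)$ line up the two descriptions. If one prefers an elementwise argument instead, it runs along the same lines: $\beta \in \Phi^+ \setminus \Inv(w)$ iff $\beta \in \Phi^+$ and $w^{-1}(\beta) \in \Phi^+$, and writing $\gamma = w^{-1}(\beta)$ this is precisely the statement that $\gamma \in \Phi^+ \setminus \Inv(w^{-1})$ and $\beta = w(\gamma)$.
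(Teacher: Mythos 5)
Your proof is correct and is essentially the paper's argument: both reduce the claim to the elementary set computation identifying $\Phi^+\setminus\Inv(w)$ with $\Phi^+\cap w(\Phi^+)$ (the paper phrases it as a chain of equalities for $w^{-1}\big(\Phi^+\setminus\Inv(w)\big)$, passing through the same intermediate set $w^{-1}\Phi^+\cap\Phi^+$, then applies $w$). No issues.
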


\begin{proof}
  The following is a straightforward calculation.
  \begin{align*}
    w^{-1}\big(\Phi^+ \setminus \Inv(w)\big)
    & = w^{-1}(\Phi^+) \setminus w^{-1}(\Phi^+ \cap w\Phi^-)\\
    & = w^{-1}(\Phi^+) \setminus \big(w^{-1}\Phi^+ \cap \Phi^-\big)\\
    & = w^{-1}\Phi^+ \cap \Phi^+\\
    & = \Phi^+ \setminus \Inv(w^{-1}).
  \end{align*}
  Applying now~$w$ to both sides yields the statement.
\end{proof}

\begin{proof}[Proof of \Cref{main-thm:chamber-bijection,,main-thm:intersection-bijection,,main-thm:set-cardinality-equality}]
  These follow from the respective theorems in \Cref{sec:deleted} applied to the set
  \[
    E = \Phi^+ \setminus \Inv(w^{-1})\,.
  \]
  We first observe that $\Anti_w = \Anti_E$.
  Second, we show $\RR_w = w(\RR_E)$.
  To this end, observe that \Cref{lem:inversions} implies that the hyperplanes cutting through~$wC$ are $\big\{ H_{\beta,1} \mid \beta \in \Phi^+ \setminus \Inv(w) \big\}$.
  \Cref{lem:inversionset} then implies that this set equals 
  \[
    \big\{ H_{w(\beta),1} \mid \beta \in E \big\}
    = \big\{ w(H_{\beta,1}) \mid \beta \in E \big\}\,.
  \]
  Therefore,~$w$ is a bijection between hyperplanes of $\Shi(E)$ inside~$C$ and hyperplanes of $\Shi(\Phi^+)$ inside~$w(C)$.
  This implies $\RR_w = w(\RR_E)$, as desired.
  Since the intersection poset is invariant under linear isomorphisms, we also obtain the poset isomorphism $\LL_w \cong w(\LL_E)$.
\end{proof}

\section{Algebraic interpretation}
\label{sec:algebra}

Fix a subset $E \subseteq \Phi^+$ of positive roots throughout this section.
We present two equivalent viewpoints on an algebraic interpretations of the Poincar\'e polynomial
\[
  \Poin_E(C,t) = \sum_{k \geq 0} \whit{E,k}{C}\cdot t^k
\]
from \Cref{prop:whitney-numbers} as the Hilbert series of a graded ring, see \Cref{coro:Zisos}.
This ring is a special case of a more general object called the \emph{Varchenko-Gel'fand ring} of a cone.
For a given Weyl cone $wC$ of a Shi arrangement (or of a Shi deletion), we define a Varchenko-Gel'fand ring for that cone and use that to obtain $\Poin_E(C,t)$.
While not, \emph{a priori}, a graded ring, the Varchenko-Gel'fand ring admits a filtration and a general result of the first author shows that the Hilbert series of the associated graded is precisely the Poincaré polynomial of $wC$; see~\cite[Theorem 1]{dorpalenbarry}.

Independently, Chapoton gave a presentation for the Varchenko-Gel'fand ring of the dominant cone of a Shi arrangement via the combinatorics of the root poset in~\cite{chapoton}.
Using the work of Armstrong-Reiner-Rhoades~\cite{Armstrong-Reiner-Rhoades}, we extend Chapoton's argument to all Weyl cones, and obtain a presentation of the Varchenko-Gel'fand ring of a Weyl cone of a Shi arrangement via antichains of the root poset.
Comparing bases of the two presentations of the Varchenko-Gel'fand ring gives the result.

While the Varchenko-Gel'fand ring is implicitly geometric (or, more specifically, defined using the language of oriented matroids), Chapoton's presentation depends only on the poset structure of the root poset.
This motivates us to define a ring which we call the \emph{order ring} of an arbitrary poset $P$.
This ring coincides with the Varchenko-Gel'fand ring of a Weyl cone when $P$ is a (subposet of) a root poset, and (in general) we find that it coincides with the \emph{coordinate ring} of the vertices of the \emph{order polytope} of $P$.

\subsection{Two equivalent ring constructions}

We present the following two $\C$-algebras associated to the subset~$E \subseteq \Phi^+$.
The first can be associated to any cone in an arrangement~\cite{VarchenkoGelfand,dorpalenbarry}, while the second can be associated to any finite poset~\cite{chapoton}.
To avoid introducing a lot of bulky notation, we define the Varchenko-Gel'fand ring only for cones of Shi deletions. 
For a more general treatment, see~\cite{dorpalenbarry}.

\medskip

The \Dfn{Varchenko-Gel'fand ring} of the dominant cone~$C$ of the Shi deletion $\Shi(E)$ is
\[
  \VG(E,C) = \big\{ f:\RR_E \to \C \big\}
\]
with pointwise addition and multiplication.
It is linearly generated by the \Dfn{indicator functions}
\[
  \big\{ \delta_R : \RR_E \to \C \mid R \in \RR_E\big\}\,.
\]
It is not hard to see (and is carefully discussed in~\cite{VarchenkoGelfand} and in~\cite[Section 2.3]{dorpalenbarry}) that $\VG(E,C)$ is also generated (as a ring) by the \Dfn{Heaviside functions} $\big\{x_\beta: \RR_E \to \C \mid \beta \in E \big\}$ with
\[
  R \mapsto
  \begin{cases}
    1 & \text{ if } R\subseteq H_{\beta,1}^-\\
    0 & \text{ else}.
  \end{cases}
\]
To be explicit, we have
\[
  x_\beta = \sum_{R\subseteq H_{\beta,1}^-} \delta_R,\quad \delta_R = \prod_{R\subseteq H_{\beta,1}^-} x_\beta ~ \prod_{R\subseteq H_{\beta,1}^+} (1 - x_\beta).
\]
In \Cref{thm:1}, we saw for a region $R \in \RR_E$ with ceiling set $A = \ceil(R) \in \Anti_E$ and a root $\beta \in E$, that
\begin{equation}
  x_\beta(R) = 1 \iff R \subseteq H_{\beta,1}^- \iff \beta \in \ideal_E(A)\,.
  \tag{$\star\star$}
  \label{eq:VGring}
\end{equation}

This means that the Varchenko-Gel'fand ring is in the present case isomorphic to the following ring construction for any finite poset.

\medskip

Following a construction by Chapoton~\cite{chapoton}, we define the \Dfn{order ring} of the subposet~$E \subseteq \Phi^+$.
To this end, let $\ideals_E = \big\{ \ideal_E(A) \mid A \in \Anti_E \big\}$ be its sets of order ideals inside $E \subseteq \Phi^+$ and define the $\C$-algebra
\[
  \FP(E) = \big\{ f : \ideals_E \to \C \big\}
\]
with pointwise addition and multiplication.
It is generated linearly (as a $\C$-module) by the \Dfn{indicator functions}
\[
  \big\{ \delta_I : \ideals_E \to \C \mid I \in \ideals_E\big\}
\]
and it is also generated (as a ring) by the \Dfn{Heaviside functions} $\big\{y_\beta: \ideals_E \to \C \mid \beta \in E \big\}$ given by
\[
  I \mapsto
  \begin{cases}
    1 & \text{ if } \beta\in I\\
    0 & \text{ else}.
  \end{cases}
\]
Concretely, we have
\[
  y_\beta = \sum_{I\in\ideals_E\,:\,\beta \in I} \delta_I,\quad \delta_I = \prod_{\beta \in I} y_\beta ~\cdot \prod_{\beta\in E\setminus I} (1 - y_\beta).
\]
These Heaviside functions allow us to express $\FP(E)$ as the quotient of a polynomial ring.
Define a surjective ring homomorphism
\[
  f: \C[z_\beta \mid \beta \in E] \twoheadrightarrow \FP(E)
\]
by sending~$1$ to~$1$, $z_\beta$ to~$y_\beta$.
We obtain
\[
  \FP(E) \cong \C[z_\beta \mid \beta \in E]\big/\ker(f)\,.
\]
Note that $\beta\preceq \gamma$ in $E$ implies $\ideal(\beta) \subseteq \ideal(\gamma)$.
We thus obtain $z_\beta(1 - z_\gamma) \in \ker(f)$.
We aim to show that these relations generate~$\ker(f)$.
We start with collecting the following auxiliary results.
The first is an observation that follows from~\Cref{eq:VGring}.

\begin{prop}
\label{prop:vg-or-bijection}
  Let $R \in \RR_E$ and let $I = \ideal_E(A)$ be the order ideal in~$E$ generated by $A = \ceil(R)$.
  Then
  \[
  \begin{array}{ccc}
    \VG(E,C) & \cong  &\FP(E) \\
    \delta_R & \longmapsto     &\delta_I
  \end{array}
  \]
  is a ring isomorphism between~$\VG(E,C)$ and~$\FP(E)$.
  In particular, they are both free~$\C$-modules of the same dimension.
\end{prop}

The following is standard in the theory of Gröbner bases, and can be found in~\cite[Lemma~7, Remark~8]{dorpalenbarry}.
In what follows, we will use the notation from~\cite[Section~2]{dorpalenbarry} regarding filtrations, rings, and associated graded rings.
For a more general treatment of Gröbner bases and in particular for definitions of initial ideals and standard monomials, we refer to~\cite[Chapters 1-5]{cox-little-oshea} and to~\cite[Chapter~1]{Sturmfels}.

\begin{lem}
\label{lem:integer-grobner}
  Let $\omega$ be a monomial ordering and $I$ be some finite indexing set.
  Assume one has a $\C$-algebra surjection
  $f : \C[z_1,\ldots,z_n] \twoheadrightarrow R$
  in which $R$ is a free $\C$-module of dimension $r$,
  and $\GGG=\{g_i\}_{i \in I} \subset \C[z_1,\ldots,z_n]$ has the following properties:
  \begin{itemize}
  \item[(i)]  $\GGG \subset \ker(f)$.
  \item[(ii)] The set of $\init_{\omega}\GGG$-standard monomials $\Anti=\{m_1,\dots,m_t\}$ has cardinality $t\leq r$.
  \end{itemize}
  Then $\mathcal{G}$ is a Gröbner basis for $\ker(f)$.
  If $\omega$ is also a degree-ordering, then $\init_{deg}\mathcal{G}$ is a Gröbner basis for the corresponding ideal of the associated graded \changes{ring} with respect to the degree filtration.
\end{lem}

\begin{prop}
\label{prop:fp-vg-generating-set}
  We have
  \[
  \FP(E) \cong \C[z_\beta \mid \beta \in E] \big/ \langle z_\beta(1 - z_\gamma) \mid \beta \preceq \gamma\rangle \,.
  \]
\end{prop}

\begin{proof}
  Let $f: \C[e_\beta \mid \beta \in E] \to \FP(E)$ as above~\Cref{lem:integer-grobner} so that we have
  \[
    \FP(E) \cong S/\ker(f)\,.
  \]
  We have already seen that
  \[
    \GGG = \{ z_\beta(1 - z_\gamma) \mid \beta \preceq \gamma \} \subseteq \ker(f)
  \]
  For any monomial order~$\omega$, the initial term of $e_\beta(1 - e_\gamma)$ is $-e_{\beta}e_{\gamma}$.
  Since~$\GGG$ contains one relation $e_{\beta}(1 - e_\gamma)$ for each order relation of the poset~$E$, the initial terms of elements of~$\GGG$ are
  \[
    \init_{\omega}\GGG = \{ -z_{\beta}z_{\gamma} \mid \beta \preceq \gamma \text{ in } E \}\,.
  \]
  Thus the standard monomials are given by $\{ \prod_{\beta \in A} z_\beta \mid A \in \Anti_E\}$.
  \Cref{thm:1} now implies that the number of standard monomials equals the number of regions in~$\RR_E$.
  It follows from~\cite[Example~A]{zaslavsky77} and~\cite[Theorem~1]{dorpalenbarry} that this number of regions equals the dimension of~$\VG(E,C)$.
  By~\Cref{prop:vg-or-bijection}, $\FP(E)$ and $\VG(E,C)$ are free of the same dimension.
  With~\Cref{lem:integer-grobner}, the statement follows.
\end{proof}

\begin{coro}
\label{coro:Zisos}
  For any term order, we have a $\C$-algebra isomorphism
  \begin{align*}
    \mathfrak{gr}\big(\FP(E)\big) & \cong \C[z_\beta \mid \beta \in A]\,\big/\, \langle z_\beta z_\gamma \mid \beta \preceq \gamma \rangle
  \end{align*}
  where $\mathfrak{gr}\big(\FP(E)\big)$ is the associated graded ring with respect to the degree filtration.
  Moreover,
  \[
    \operatorname{Hilb}\left(\mathfrak{gr}\big(\FP(E)\big),t\right) = \sum_{A\in\Anti_E} t^{\# A} = \Poin_E(C,t)\,.
  \]
\end{coro}

\begin{proof}
  The first part is an immediate consequence of the proof of \Cref{prop:vg-or-bijection} using \Cref{lem:integer-grobner}.
  To obtain the first equality for the Hilbert series, observe that the $k$\th\ graded component is linearly generated by the squarefree monomials
    \[
      \left\{ \prod_{\beta \in A} z_\beta \mid A \in \Anti_E \text{ with } |A| = k \right\}\,.
    \]
    The second equality finally follows with\cite[Theorem~1]{dorpalenbarry} using $\VG(E,C) \cong  \FP(E)$ from \Cref{prop:vg-or-bijection}.
\end{proof}

\begin{ex}
  Consider the permutation $w = 24351$ in the symmetric group on $\{1,\dots,5\}$.
  The non-inversions of its inverse $w^{-1} = 51324$ are 
  \[
    E = \{ e_2 - e_3,\ e_2 - e_4,\ e_2 - e_5,\ e_3 - e_5,\ e_4 - e_5\} \,.
  \]
  Below is a picture of the root poset for the root system of type~$A_4$, with the non-inversions shaded:
  \begin{center}
  \begin{tikzpicture}[scale=0.8]
  \foreach \x in {0,...,3} {
  \foreach \y in {0,...,\x} {
    \fill (\x - .5*\y,\y) circle (0.1);
  }
  }
  \draw[] (0,0) -- (.5,1) -- (1,0) -- (1.5,1) -- (2,0) -- (2.5,1) -- (3,0);
  \draw[] (.5,1) -- (1,2) -- (1.5,1) -- (2,2) -- (2.5,1);
  \draw[] (1,2) -- (1.5,3) -- (2,2);
  
  \fill[violet!50] (1,0) circle (0.15);
  \fill[violet!50] (1.5,1) circle (0.15);
  \fill[violet!50] (2.5,1) circle (0.15);
  \fill[violet!50] (2,2) circle (0.15);
  \fill[violet!50] (3,0) circle (0.15);
  \end{tikzpicture}
  \end{center}
  In this case, we have
  \begin{multline*}
    \Anti_E = \big\{ \emptyset,\ \{e_2 - e_3\},\ \{e_2 - e_4\},\ \{e_2 - e_5\},\ \{e_3 - e_5\},\ \{e_4 - e_5\},\\
    \{e_2 - e_3, e_3-e_5\},\ \{e_2 - e_4, e_4-e_5\},\ \{e_2-e_3, e_4 - e_5\},\ \{e_2-e_4, e_3 - e_5\} \big\}
  \end{multline*}
  and we obtain $\Poin_E(C,t) = 1 + 5t + 4t^2$.
  Indexing the variables according to the non-inversions, we obtain that $\FP(E)$ is a quotient of $\C[z_{23}, z_{24}, z_{25}, z_{35}, z_{45}]$ by the ideal as given in \Cref{thm:C-iso}.
  The defining ideal of the associated graded is generated by the relations
  \[
    \{z_{23}^2, z_{24}^2, z_{25}^2, z_{35}^2, z_{45}^2, \quad z_{23}z_{24}, z_{23}z_{25}, z_{24}z_{25}, z_{45}z_{35}, z_{45}z_{25}, z_{35}z_{25}\}\,.
  \]
  From this, we can compute the standard monomials
  \[
    \big\{1,\quad
    z_{23}, z_{24}, z_{25}, z_{35}, z_{45},\quad
    z_{12}z_{34}, z_{12}z_{25}, z_{24}z_{34}, z_{24}z_{35}, z_{23}z_{25}, z_{25}z_{34}\}\,,
  \]
  and see that
  \[
    \operatorname{Hilb}\left(\mathfrak{gr}\big(\FP(E)\big),t\right) = 1 + 5t + 4t^2 = \Poin_E(C,t)\,.
  \]
\end{ex}

\subsection{Order rings and order polytopes}
\label{sec:order-polys}

In the section, we motivate our choice of name by extending the order ring to general finite posets and discuss its connection to the order polytope.
Based on the calculation to obtain \Cref{prop:fp-vg-generating-set}, we show that the order ring is the \emph{coordinate ring} of the vertices of the order polytope.

\medskip
For a finite poset~$P = (P,\preceq)$, denote the set of antichains by $\Anti_P$ and extend the definition of its \Dfn{order ring} $\FP(P)$ from the previous section verbatim.
The \Dfn{order polytope} of~$P$ was introduced in~\cite[Definition~1.1]{StanleyTwoPosetPolytopes} as
\begin{align*}
  \mathcal O(P) &= \big\{ (a_p)_{p\in P} \in [0,1]^P \mid a_p \leq a_q \text{ for all } p \preceq q \big\} \subset \R^P\,.
\end{align*}
This polytope encodes many interesting properties of the poset.

\begin{prop}[\!\!{{\cite[Corollary 1.3]{StanleyTwoPosetPolytopes}}}]
\label{prop:orderpolytopevertices}
The vertices of $\mathcal O(P)$ are the indicator vectors of order filters,
\begin{equation*}
  V_P = \big\{ (a_p)_{p \in P} \in \{0,1\}^P \mid a_p = 1 \Rightarrow a_q = 1 \text{ for all } p \preceq q \big\}\,.
\end{equation*}
\end{prop}

A quick calculation using the ideal-variety correspondence (see~\cite[Chapter 4]{cox-little-oshea}, for example) implies that $V_P$ is an \emph{affine variety} and decomposes as
\begin{multline*}
  V_P = \big\{ (z_p)_{p\in P} \in \C^{P} \mid z_p(z_p - 1) = 0 \text{ for all } p \in P \big\} \\\cap 
  \big\{z\in \C^{P} \mid z_p(z_p - z_q) = 0 \text{ for all } p \prec p \big\}\,,
\end{multline*}
where the first condition enforces the property of being zero-one vectors, and the second translates the condition from \Cref{prop:orderpolytopevertices}.
Therefore its vanishing ideal is
\[
  I(V_P) = \sqrt{\big\langle z_p(z_p - 1) \mid p\in P\big\rangle + \big\langle z_p(z_p - z_q) \mid p \prec q\big\rangle}\,.
\]

We aim to show that
\[
  \GGG= \big\{z_p(1 - z_q) \mid p \preceq q\big\}
\]
is a Gröbner basis for~$I(V_P)$ by evoking \Cref{lem:integer-grobner}.

\begin{lem}
\label{prop:containment}
  $\GGG \subseteq I(V_P)$.
\end{lem}

\begin{proof}
  For $p \preceq q$, we calculate
  \[
    z_p(1-z_q) = z_p - z_pz_q = z_p(z_p-z_q)-z_p(z_p-1)\,.
  \]
  Since $z_p(z_p-z_q),z_p(z_p-1) \in I(V_P)$, the statement follows.
\end{proof}

\begin{thm}
\label{thm:C-iso}
  $\GGG$ is a Gröbner basis of $I(V_P)$.
  Moreover,
  \[
    \FP(P) \cong \C[V_P] = \C[z_p \mid p\in P]\big/ \langle \GGG \big\rangle\,.
  \]
\end{thm}

\begin{proof}
Observe that~$|V_P| = \#\Anti_{P}$ is a finite set of points.
It is a standard exercise that its coordinate ring is $\C[V_p] = S/I(V_P) \cong \C^{\#\Anti_{P}}$.
We have seen in \Cref{prop:containment} that $\GGG\subseteq I(V_P)$ and we obtain a surjection 
\[
  f: S/\GGG  \twoheadrightarrow S/I(V_P) \cong \C^{\#\Anti_{P}}\,.
\]
Analogously to the proof of \Cref{prop:fp-vg-generating-set}, any term order~$\omega$ yields that the set of $\init_\omega(\GGG)$-standard monomials is in bijection with antichains in~$P$.
The surjection~$f$ thus satisfies both conditions for \Cref{lem:integer-grobner} and we obtain that~$\GGG$ is a Gröbner basis for~$I(V_P)$.
This implies
\[
  \C[V_P] = \C[z_p \mid p\in P]\big/ \langle \GGG \big\rangle\,.
\]
The proof of $\FP(P) \cong \C[V_P]$ is then verbatim the same as the proof of \Cref{prop:fp-vg-generating-set}.
\end{proof}

\begin{coro}
\label{thm:Zisos}
  For any term order, we have a $\C$-algebra isomorphism
  \begin{align*}
    \mathfrak{gr}\big(\FP(P)\big) & \cong \C[z_p \mid p \in P]\,\big/\, \langle z_pz_q \mid p \preceq q\rangle
  \end{align*}
  where $\mathfrak{gr}\big(\FP(P)\big)$ is the associated graded ring with respect to the degree filtration.
  Moreover,
  \[
    \operatorname{Hilb}\left(\mathfrak{gr}\big(\FP(P)\big),t\right) = \sum_{A\in\Anti_P} t^{\# A}\,.
  \]
\end{coro}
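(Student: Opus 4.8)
The plan is to deduce \Cref{thm:Zisos} directly from the isomorphism and the Gröbner basis analysis already established in the proof of \Cref{thm:Ziso1}. Recall from that proof that $\rho : \Z[z_p \mid p \in P] \to \FP(P)$, $z_p \mapsto y_p$, is surjective with kernel $\langle \GGG \rangle = \langle z_p(1-z_q) \mid p \preceq q\rangle$, and that for any fixed term order the initial terms of $\GGG$ are exactly the quadratic monomials $\{z_pz_q \mid p \preceq q\}$, with associated standard monomials being the squarefree monomials $\prod_{a \in A} z_a$ indexed by antichains $A$ of $P$. The cited \cite[Lemma~8]{dorpalenbarry} moreover identifies these initial terms as a Gröbner basis for $\ker(\rho)$, so $\operatorname{in}(\ker\rho) = \langle z_pz_q \mid p \preceq q\rangle$.

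First I would set up the degree filtration on $\FP(P)$: filter $\FP(P)$ by the images under $\rho$ of the spaces of polynomials of degree $\le k$, equivalently $F_k = \rho\big(\Z[z_p]_{\le k}\big)$. Since $\rho$ is surjective and filtered, the associated graded ring $\grr(\FP(P))$ is the quotient of $\grr\big(\Z[z_p]\big) = \Z[z_p]$ by the associated graded of the kernel, which is precisely the initial ideal $\operatorname{in}(\ker\rho) = \langle z_pz_q \mid p \preceq q\rangle$. This is the standard fact that for a filtered surjection of filtered rings, $\grr$ of the target is $\grr$(source)$/\grr$(kernel), and that $\grr$ of an ideal with respect to the degree filtration on a polynomial ring is its initial ideal with respect to a degree-compatible term order — which any term order refining total degree provides; in fact since $\GGG$ has a Gröbner basis whose initial terms are homogeneous of degree $2$, the conclusion is term-order independent, matching the statement. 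This yields the displayed isomorphism $\grr\big(\FP(P)\big) \cong \Z[z_p \mid p \in P]\big/\langle z_pz_q \mid p \preceq q\rangle$.

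For the second assertion, the $k$\th graded component of $\Z[z_p]/\langle z_pz_q \mid p \preceq q\rangle$ has, as a $\Z$-basis, the degree-$k$ monomials that are not divisible by any $z_pz_q$ with $p \preceq q$ and not by any $z_p^2$; these are exactly the squarefree monomials $\prod_{p \in A} z_p$ with $A$ an antichain and $|A| = k$. Transporting this basis across the isomorphism of the first part (which preserves degree) gives the stated description of the $k$\th graded component of $\grr\big(\FP(P)\big)$. I would also remark, for consistency with \Cref{prop:whitney-numbers} and \Cref{cor:poincare}, that counting these basis elements recovers the Whitney numbers.

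I expect the only real subtlety — not a genuine obstacle but a point to state carefully — is the passage ``$\grr$ of the kernel equals the initial ideal,'' since over $\Z$ one must ensure the standard monomials form a $\Z$-module basis (not merely a spanning set or a $\QQ$-basis); but this is exactly the content of \cite[Lemma~8]{dorpalenbarry} applied in the proof of \Cref{thm:Ziso1}, where the rank count $\#\ideals_E = \#\Anti_P$ forces the standard monomials to be a genuine $\Z$-basis, so no freeness issue arises. Everything else is the routine bookkeeping of filtered rings, which I would not spell out in full.
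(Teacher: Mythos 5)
Your argument is correct and follows essentially the same route as the paper: the statement is obtained as a direct consequence of the Gr\"obner-basis analysis in the proof of \Cref{thm:Ziso1} (initial terms $z_pz_q$ for $p\preceq q$ under any term order, standard monomials indexed by antichains, and \cite[Lemma~8]{dorpalenbarry} guaranteeing a genuine $\Z$-basis). You merely spell out the filtered-ring bookkeeping --- passing from the initial ideal to the associated graded of the degree filtration --- which the paper leaves implicit, and your handling of that step (reductions by $\GGG$ are degree-decreasing, so the leading-form ideal is $\langle z_pz_q\mid p\preceq q\rangle$) is sound.
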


\begin{proof}
  The first part is an immediate consequence of the proof of \Cref{thm:C-iso} using \Cref{lem:integer-grobner} where we use that $-z_pz_q$ is the leading term of $z_p(1-z_q)$ for any term order.
  To obtain the equality for the Hilbert series, observe that the $k$\th\ graded component is linearly generated by the squarefree monomials
    \[
      \left\{ \prod_{p \in A} z_p \mid A \in \Anti_P \text{ with } |A| = k \right\}\,. \qedhere
    \]
\end{proof}

\begin{rem}[A recursion on standard monomials]\label{rem:recursion}
  We have seen in \Cref{thm:Zisos} that the standard monomials for $\FP(P)$ with respect to any term order are identified with the set of antichains in~$P$.
  Since $V_P \subset \{0,1\}^P$, we may on the other hand revisit \cite[Lemma~3.1]{EngstroemSanyalStump}, where a recursive description of the standard monomials is given for the lexicographic term order induced by a total ordering of the variables $\{ z_p \mid p \in P\}$.
  These two viewpoints match as we describe now.
  For ease of notation, we assume that~$P$ is naturally labeled by $P = \{1,\dots,k\}$, meaning that if $i\preceq j$ in~$P$, then $i \leq j$.
  Now let~$P^1 = P \setminus \{k\}$ be the subposet of~$P$ obtained by deleting the element~$k$ (which is necessarily maximal in~$P$) and let~$P^0 = P \setminus \ideal(k) \subseteq P^1 \subseteq P$ be obtained by removing the principal order ideal generated by~$k$.
  The recursive description in \cite[Lemma~3.1]{EngstroemSanyalStump} for the lexicographic term order on $\C[z_1,\dots,z_k]$ induced by $z_1 > z_2 > \dots > z_k$ then becomes the well-known recursive description of order ideals of~$P$ given by
  \[
    \Anti_P = \Anti_{P^1} \cup \big\{ A \cup \{k\} \mid A \in \Anti_{P^0} \big\}
            \,.
  \]
  This equality says that an antichain in~$P$ does either not contain~$k$ and is thus an antichain in $P^1$, or it does contain~$k$ and it thus is the union of antichain in~$P^0$ with $\{k\}$.
\end{rem}

\begin{ex}
\label{sec:longexample2}
  We close this section with an example, for the poset pictured below.
  \begin{center}
  \begin{tikzpicture}[scale=.75]
  \node[invisivertex] (1) at (0,0){1};
  \node[invisivertex] (2) at (2,0){2};
  \node[invisivertex] (3) at (1,1){3};
  \node[invisivertex] (4) at (0,2){4};
  \node[invisivertex] (5) at (2,2){5};
  
  \path[-] (1) edge [] node[above] {} (3);
  \path[-] (2) edge [] node[above] {} (3);
  \path[-] (3) edge [] node[above] {} (4);
  \path[-] (3) edge [] node[above] {} (5);
  \end{tikzpicture}
  \end{center}
  The vertices of its order polytope $\mathcal O(P)$ are
  \begin{align*}
  V_P = \big\{ 00000,00010,00001,00011,00111,10111,01111,11111\big\}.
  \end{align*}
  Its order ring is the coordinate ring of $V_P$,
  \[
    \FP(P) \cong \C[V_P] = \C[z_1,z_2,z_3,z_4,z_5] \big/ \big\langle \GGG\big\rangle\,,
  \]
  $\GGG= \big\{z_p(1 - z_q) \mid p \preceq q\big\}$
  where $\GGG$ consists of $z_i-z_i^2$ for $i \in \{1,\dots,5\}$ as well as of
  \begin{align*}
  z_1(1- z_3), & ~z_1(1- z_4),~z_1(1- z_5),~z_2(1- z_3),~z_2(1- z_4)\,,\\
  & z_2(1- z_4),~z_2(1- z_5),~z_3(1- z_4),~z_3(1- z_5).
  \end{align*}
  The associated graded (with respect to the degree filtration) of~$\FP(P)$ is
  \[
    \grr(\FP(P))
    \cong
    \C \cdot \{1\}
    \oplus \C \cdot \{z_1,\thinspace z_2,\thinspace z_3,\thinspace z_4,\thinspace z_5\}
    \oplus \C \cdot \{z_1\cdot z_2,\thinspace z_4\cdot z_5\}\,.
  \]
  indexed by the antichains of~$P$,
  \[
    \Anti_{P} = \big\{ \emptyset,~\{1\},~\{2\},~\{3\},~\{4\},~\{5\},~\{1,2\},~\{4,5\} \big\}\,.
  \]
  Its Hilbert series finally is $\Hilb(\grr(\FP(P)),t) = 1 + 5t + 2t^2$.
\end{ex}



\bibliographystyle{plain}
\bibliography{bibliog}

\begin{thebibliography}{10}

\bibitem{armstrong}
Drew Armstrong.
\newblock Generalized noncrossing partitions and combinatorics of {C}oxeter groups.
\newblock {\em Mem. Amer. Math. Soc.}, 202(949):x+159, 2009.

\bibitem{Armstrong-Reiner-Rhoades}
Drew Armstrong, Victor Reiner, and Brendon Rhoades.
\newblock Parking spaces.
\newblock {\em Adv. Math.}, 269:647--706, 2015.

\bibitem{armstrong-rhoades}
Drew Armstrong and Brendon Rhoades.
\newblock The {S}hi arrangement and the {I}sh arrangement.
\newblock {\em Trans. Amer. Math. Soc.}, 364(3):1509--1528, 2012.

\bibitem{athanasiadis-catalan-nums}
Christos~A. Athanasiadis.
\newblock On a refinement of the generalized {C}atalan numbers for {W}eyl groups.
\newblock {\em Trans. Amer. Math. Soc.}, 357(1):179--196, 2005.

\bibitem{athanasiadis-linusson}
Christos~A. Athanasiadis and Svante Linusson.
\newblock A simple bijection for the regions of the {S}hi arrangement of hyperplanes.
\newblock {\em Discrete Math.}, 204(1-3):27--39, 1999.

\bibitem{athanasiadis-reiner}
Christos~A. Athanasiadis and Victor Reiner.
\newblock Noncrossing partitions for the group {$D_n$}.
\newblock {\em SIAM J. Discrete Math.}, 18(2):397--417, 2004.

\bibitem{biane-josuat-verges}
Philippe Biane and Matthieu Josuat-Verg\`es.
\newblock Noncrossing partitions, {B}ruhat order and the cluster complex.
\newblock {\em Ann. Inst. Fourier (Grenoble)}, 69(5):2241--2289, 2019.

\bibitem{chapoton}
Fr\'{e}d\'{e}ric Chapoton.
\newblock Antichains of positive roots and {H}eaviside functions.
\newblock {\em arXiv 0303220}, pages 1--7, 2003.

\bibitem{cox-little-oshea}
David~A. Cox, John Little, and Donal O'Shea.
\newblock {\em Ideals, varieties, and algorithms: An introduction to computational algebraic geometry and commutative algebra}.
\newblock Undergraduate Texts in Mathematics. Springer, Cham, fourth edition, 2015.

\bibitem{dorpalenbarry}
Galen Dorpalen-Barry.
\newblock The {V}archenko-{G}el'fand ring of a cone.
\newblock {\em J. Algebra}, 617:500--521, 2023.

\bibitem{dbpw}
Galen Dorpalen-Barry, Nicholas Proudfoot, and Jidong Wang.
\newblock Equivariant cohomology and conditional oriented matroids.
\newblock {\em International Math Research Notices}, to appear.

\bibitem{dyer}
Matthew Dyer.
\newblock On the weak order of {C}oxeter groups.
\newblock {\em Canad. J. Math.}, 71(2):299--336, 2019.

\bibitem{EngstroemSanyalStump}
Alexander Engstr\"{o}m, Raman Sanyal, and Christian Stump.
\newblock Standard complexes of matroids and lattice paths.
\newblock {\em Vietnam J. Math.}, 50(3):763--779, 2022.

\bibitem{fishel}
Susanna Fishel.
\newblock A survey of the {S}hi arrangement.
\newblock In {\em Recent trends in algebraic combinatorics}, volume~16 of {\em Assoc. Women Math. Ser.}, pages 75--113. Springer, Cham, 2019.

\bibitem{humphreys}
James~E. Humphreys.
\newblock {\em Reflection {G}roups and {C}oxeter {G}roups}, volume~29 of {\em Cambridge Studies in Advanced Mathematics}.
\newblock Cambridge University Press, Cambridge, 1990.

\bibitem{reiner97}
Victor Reiner.
\newblock Non-crossing partitions for classical reflection groups.
\newblock {\em Discrete Math.}, 177(1-3):195--222, 1997.

\bibitem{shi-sign-types}
Jian~Yi Shi.
\newblock Sign types corresponding to an affine {W}eyl group.
\newblock {\em J. London Math. Soc. (2)}, 35(1):56--74, 1987.

\bibitem{sommers}
Eric~N. Sommers.
\newblock B-stable ideals in the nilradical of a {B}orel subalgebra.
\newblock {\em Canadian Mathematical Bulletin}, 48(3):460–472, 2005.

\bibitem{stanley-hyperplanes}
Richard Stanley.
\newblock An {I}ntroduction to {H}yperplane {A}rrangements.
\newblock 13:389--496, 2007.

\bibitem{StanleyTwoPosetPolytopes}
Richard~P. Stanley.
\newblock Two poset polytopes.
\newblock {\em Discrete Comput. Geom.}, 1(1):9--23, 1986.

\bibitem{Sturmfels}
Bernd Sturmfels.
\newblock {\em Gr\"{o}bner {B}ases and {C}onvex {P}olytopes}.
\newblock University Lecture Series, Volume 8. American Mathematical Society, Providence, RI, 1996.

\bibitem{VarchenkoGelfand}
Alexander Varchenko and Izrail~Moiseevich Gel'fand.
\newblock Heaviside functions of a configuration of hyperplanes.
\newblock {\em Funktsional. Anal. i Prilozhen.}, 21(4):1--18, 96, 1987.

\bibitem{wachs-walker}
Michelle~L. Wachs and James~W. Walker.
\newblock On geometric semilattices.
\newblock {\em Order}, 2(4):367--385, 1986.

\bibitem{yoshinaga}
Masahiko Yoshinaga.
\newblock Characterization of a free arrangement and conjecture of {E}delman and {R}einer.
\newblock {\em Invent. Math.}, 157(2):449--454, 2004.

\bibitem{zaslavsky77}
Thomas Zaslavsky.
\newblock A combinatorial analysis of topological dissections.
\newblock {\em Advances in Math.}, 25(3):267--285, 1977.

\end{thebibliography}

\end{document}